\theoremstyle{plain}
\newtheorem{thm}{Theorem}[section]
\newtheorem*{thm*}{Theorem}
\newtheorem{lem}{Lemma}[section]
\newtheorem{corl}{Corollary}[section]
\theoremstyle{definition}
\newtheorem{example}{Example}
\theoremstyle{remark}
\newtheorem{remk}{Remark}[section]
\newcommand{\1}{1\!\!\,{\rm I}}
\newcommand{\wt}{\widetilde}
\newcommand{\be}{\begin{equation}}
\newcommand{\ee}{\end{equation}}
\newcommand{\ve}{\varepsilon}
\newcommand{\vf}{\varphi}
\newcommand{\mbR}{{\mathbb R}}
\newcommand{\mbN}{{\mathbb N}}
\newcommand{\sign}{\mathop{\mathrm{sign}}}
\newcommand{\cF}{{\mathcal F}}
\newcommand{\supp}{\supp{\rm supp}}
\newcommand{\Pb}{\mathrm{P}}
\begin{document}
\title{On perturbations of an ODE with non-Lipschitz  coefficients by a  small self-similar noise
}

\author{Andrey  Pilipenko}
\address{
 Institute of Mathematics of Ukrainian National Academy of Sciences; National Technical University of Ukraine ``KPI''}
 \thanks{Research is partially supported by FP7-People-2011-IRSES Project number 295164}
\author{ Frank Norbert Proske}
\address{
 Department of Mathematics, University of Oslo
 }

\begin{abstract}
We study the  limit behavior
 of differential equations with non-Lipschitz coefficients that are perturbed by a small self-similar noise.
It is proved that the limiting process is equal to the maximal  solution or minimal solution with certain probabilities $p_+$ and $p_-=1-p_+$, respectively.
 We propose a space-time
transformation that reduces  the investigation of the original problem to the study of the exact growth rate of a solution to a certain SDE with self-similar noise. This problem 
  is interesting in itself. Moreover, the probabilities $p_+$ and $p_-$ coincide 
with probabilities that the solution of the transformed equation converges to $+\infty$  or $-\infty$  as $t\to\infty,$ respectively.
\end{abstract}
\keywords{zero-noise limit; Peano phenomenon;    growth rate of solutions to stochastic equations; selection of a solution; self-similar process}
\subjclass[2010]{60H10, 	60F99}

 \maketitle

\section*{Introduction}
We study the limit behavior of the sequence  of small-noise stochastic equations
\be\label{eq_001}
X_\ve(t)=\int_0^t (c_+\1_{X_\ve(s)\geq 0} -c_-\1_{X_\ve(s)< 0})
 |X_\ve(s)|^\alpha ds + \ve B_\beta(t),
\ee
where $c_\pm>0, \alpha\in(-1,1),$ $B_\beta(t), t\geq 0$ is a self-similar process with index $\beta>0.$

Note that the drift $a(x)=(c_+\1_{x\geq 0} -c_-\1_{x<0})|x|^\alpha \sign x$ does not satisfy the Lipschitz property at $0$, and the limit equation 
\be\label{eq002}
X_0(t)=\int_0^t a(X_0(s)) ds, t\geq 0,
\ee
has two families of solutions
$$
X_0^{+,\tau}(t) = 
\begin{cases} 0,\ t\in[0,\tau],\\
(c_+(1-\alpha)t)^{\frac{1}{1-\alpha}}, \ t\geq \tau;
\end{cases}
X_0^{-,\tau}(t) = 
\begin{cases} 0,\ t\in[0,\tau],\\
-(c_-(1-\alpha)t)^{\frac{1}{1-\alpha}}, \ t\geq \tau,
\end{cases}
$$
where $\tau\in[0,\infty]$ is a parameter.

{Note that if the  self-similar process is a L\'{e}vy stable process or a fractional Brownian motion (fBm), then there is a unique solution to
the  equation \eqref{eq_001} under a suitable  relation between $\alpha $ and $\beta$, see \cite{EngelbertSchmidt, TanakaTsuchiya74, NualartOuknine, Portenko1994} and Remark \ref{remUniq} below. 
That's why a limit of $\{X_\ve\}$ as $\ve \to 0$ can be considered as a natural selection for a solution to \eqref{eq002}. }

The seminal  result on a selection problem was a paper of Bafico and Baldi \cite{BaficoBaldi} who considered the case where $B_\beta$ is a Wiener process. They proved (even for  more general form of the drift) that the sequence of distributions of $X_\ve$ weakly converges as $\ve \to0$ to 
$p_-\delta_{X^-}+p_+\delta_{X^+}$, where $p_-+p_+=1$ and $X^\pm$ are the maximal  and the minimal 
solutions to the limit equation. In our case $X^\pm(t)=\pm(c_\pm(1-\alpha)t)^{\frac{1}{1-\alpha}}$.

The small-noise problem for non-Lipschitz equations was studied from the different points of view,
see \cite{Flandoli}, \cite{FlandoliMetrika} and references therein. As for the study of the small-noise problem for SDE's with discontinuous multidimensional drift vector fields we also refer to \cite{DelarueFlandoliVincenzi}, \cite{BuckdahnOuknineQuincampoix} and \cite{PilipenkoProske}.   

We propose a new approach. We make the following transformation of space and time 
$$\tilde X (t):=\tilde X_\ve(t):=\ve^{\frac{1}{ (1-\alpha)\beta-1}}   X_\ve (\ve^{\frac{\alpha-1}{ (1-\alpha)\beta-1}}t),\ t\geq 0,$$
then  we use the self similarity property of the process $B_\beta,$ and show that $\tilde X$ satisfies SDE
\be\label{eq_dod}
\wt X(t)=\int_0^t (c_+\1_{\wt X (s)\geq 0} -c_-\1_{\wt X (s)< 0})
 |\wt X (s)|^\alpha ds + \wt B_\beta(t), t\geq 0,
\ee
where $\wt B_\beta\overset{d}=  B_\beta.$ Observe that if \eqref{eq_dod} has a unique solution, then its distribution is independent of $\ve$.
 Hence, the limit behavior of $X_\ve$ as $\ve \to0$ is closely related with the behavior of $\tilde X(t)$ as $t\to\infty.$

Under general conditions a self-similar process with parameter $\beta$ a.s. has a growth that does not exceed 
$t^{\beta+\delta}$  as $t\to\infty$ for any $\delta>0$ (see \cite{Khintchine,  Kono, Takashima89}). This holds, for example, if $\tilde B$ is a L\'{e}vy $\beta^{-1}$-stable process or an fBm.
So, if $\alpha +\beta^{-1}>1$, then $\tilde B_\beta(t)=o(|X^\pm(t)|), t\to\infty, $ where
 $X^\pm=\pm(c_\pm(1-\alpha)t)^{\frac{1}{1-\alpha}}$. Therefore it is natural to 
expect that if $\wt X(t)$ converges to $+\infty$ (or $-\infty$) as $t\to\infty $ and  $\alpha +\beta^{-1}>1,$ then 
$\wt X(t)\sim X^+(t), t\to\infty$ (or $\sim X^-(t) $ respectively). We prove such result 
on asymptotic behavior of (small deterministic) perturbation of an integral equation in \S\ref{section3}. 
It should be noted that assumption $\alpha+\beta^{-1}>1$ ensures existence of a unique solution 
if $B_\beta$ is a symmetric $\beta^{-1}$-stable  L\'{e}vy process or fBm, see Remark \ref{remUniq}. Moreover uniqueness fails  if $\alpha+\beta^{-1}<1$, see \cite[Theorem 3.2]{TanakaTsuchiya74} for a L\'{e}vy stable process (the proof of non-uniqueness for fBm is the same).

The application of deterministic results of \S \ref{section3} to the study of exact growth rate of solutions to the  SDE \eqref{eq_dod} is given in \S \ref{sec4}. 
This question is interesting by itself. 
 {It also  may have an application in the construction of consistent estimators for parameters $\alpha$ or $c_\pm$
of a solution of equation \eqref{eq_dod}.}  
The first results on    exact growth rates of solutions to stochastic differential equations were obtained  by Gikhman and Skorokhod \cite{GS} who considered SDEs with Wiener noise,
 see also \cite{KKR, BKT}. We cover some of their results, and our method does not use the It\^{o}  formula. 
Note that cases $\alpha>0$ and $\alpha<0$ are considered separately (here and also in \cite{GS}). This is related with monotonicity properties of the function $x^\alpha$  and hence with the properties of the limit ODE.

The problem whether $|\tilde X(t)|\to\infty$ as $t\to\infty$ a.s. is significant, it also appeared in \cite{GS}. We prove  {almost sure convergence to infinity} 
in the case when $B_\beta$ is a L\'{e}vy  stable process and an fBm in \S \ref{Examples}, see Examples \ref{ex1} and \ref{ex2}. 

The small-noise problem for L\'{e}vy $\beta$-stable process was solved by Flandoli and Hoegele \cite{FlandoliHoegele} who even find limit probabilities $p_\pm.$ Their method was based on very careful study of jumps of the L\'{e}vy process and exits from the neighborhood of 0.

Our approach  allows us to study the small-noise problem even in the case when 
$X_\ve(0)=x_\ve\to0, \ve\to 0.$ Under suitable normalization of $x_\ve$, the corresponding  limit
 probabilities $p_\pm$ may be different from \cite{FlandoliHoegele}. Our approach may be useful
 in the study of the small-noise problem where a drift behaves at 0 ``similarly'' to $(c_+\1_{x\geq 0} -c_-\1_{x<0})|x|^\alpha$, or in the case when   the noise a L\'{e}vy process whose   L\'{e}vy measure in a neighborhood of zero is ``close'' to L\'{e}vy measure of   L\'{e}vy $\beta$-stable process. We also plan to study the multiplicative  noise.

\section{Main Result}\label{sec1}

Let $B_\beta(t), t\geq 0,$
be a self-similar process with index $\beta>0,$ i.e., for any $a>0$ the distribution of processes
$\{B_\beta(at), t\geq 0\}$ and $\{ a^\beta B_\beta(t), t\geq 0\}$  are equal. We will always assume that all considered processes have c\'{a}dl\'{a}g trajectories.

Consider stochastic equation
\be\label{eq_main_0}
X_\ve(t)=\int_0^t (c_+\1_{X_\ve(s)\geq 0} -c_-\1_{X_\ve(s)< 0})
 |X_\ve(s)|^\alpha ds + \ve B_\beta(t),\ t\geq 0,
\ee
where $c_\pm>0.$

By a weak solution to \eqref{eq_main_0} we call a pair of adapted processes
$(\tilde X, \tilde B_\beta)$ 
defined on a filtered probability space $(\Omega, \cF, P, \{\cF_t, t\geq 0\})$ such that

1) $(\tilde X, \tilde B_\ve)$ satisfy \eqref{eq_main_0} a.s.,

2) $\tilde B_\beta \overset{d}{=} B_\beta$.

Further  we will omit tilde in the notation and assume that all processes are defined on the corresponding probability space.

If $B_\beta$ is a L\'{e}vy process, then we naturally assume that $\tilde B_\beta$ is $\cF_t$-L\'{e}vy process.
If  $B_\beta$ is an fBm, we make the same assumption on $\cF_t$ as in \cite{NualartOuknine}.

The weak  uniqueness means that all weak solutions have the same distribution.

\begin{thm}\label{thm_main}
Let  $\alpha+\beta^{-1}>1.$ Assume that 
\begin{enumerate} 
\item there exists a unique weak solution to the equation
\be\label{eq_main_00}
X(t)=\int_0^t (c_+\1_{X(s)\geq 0} -c_-\1_{X(s)< 0})
 |X(s)|^\alpha ds +  B_\beta(t),\ t\geq 0;
\ee
\item $\Pb\left(\lim_{t\to\infty} |X(t)|=\infty \right)=1$;
\item $\alpha\in(0,1)$ and there exists a function $h(t)=o(t^{\frac{1}{1-\alpha}}), t\to\infty,$ and a random sequence $\{t_n\}, \lim_{n\to\infty}t_n=\infty,$
such that $|B_\beta(t)- B_\beta(t_n)|\leq h(t-t_n), t\geq t_n,$ a.s.

or

$\alpha\in(-1,0]$ and $B_\beta(t)=o(t^{\frac1{1-\alpha}}), t\to\infty$ a.s.
\end{enumerate}
Then the sequence of distributions of processes $\{X_\ve\}$ converges in $D([0,\infty))$ to the distribution
of the process 
$$
((1-\alpha)t)^{\frac{1}{1-\alpha}}\left({c_+^{\frac{1}{1-\alpha}}}\1_{\{\lim_{t\to\infty}X(t)=+\infty\}}- {c_-^{\frac{1}{1-\alpha}}}\1_{\{\lim_{t\to\infty}X(t)=-\infty\}}\right).  
$$
 \end{thm}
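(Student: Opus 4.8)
The plan is to use the space--time rescaling from the introduction to reduce the whole family $\{X_\ve\}$ to the single process $X$ solving \eqref{eq_main_00}, and then to read off the limit from the \emph{exact} growth rate of $X$ as $t\to\infty$. Put
\[
\rho:=\frac{1}{(1-\alpha)\beta-1},\qquad \theta:=\frac{\alpha-1}{(1-\alpha)\beta-1},
\]
and note that $\alpha+\beta^{-1}>1$ forces $(1-\alpha)\beta-1<0$, so $\rho<0$, $\theta>0$, whence $\ve^{-\rho}\to0$ and $\ve^{-\theta}\to\infty$ as $\ve\to0$. First I would record the scaling identity: since the drift $a(\cdot)$ is positively homogeneous of degree $\alpha$, i.e.\ $a(\lambda x)=\lambda^{\alpha}a(x)$ for $\lambda>0$, and $B_\beta$ is self-similar of index $\beta$, the process $\ve^{\rho}X_\ve(\ve^{\theta}\,\cdot)$ is again a weak solution of \eqref{eq_main_00}; by the assumed weak uniqueness (assumption (1)) it has the law of $X$, so that
\[
X_\ve(s)\peq \ve^{-\rho}X(\ve^{-\theta}s),\qquad s\ge0.
\]
As the laws coincide for every $\ve$, it suffices to prove the asserted convergence for the processes $\wh X_\ve(s):=\ve^{-\rho}X(\ve^{-\theta}s)$ built from one fixed realisation of $X$; almost sure convergence of these in $D([0,\infty))$ will yield convergence of the laws of $X_\ve$.

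The decisive point is that the rescaling is tuned to the Peano profile $g(t):=((1-\alpha)t)^{1/(1-\alpha)}$. Using $\theta=-\rho(1-\alpha)$ one checks the exact identity $\ve^{-\rho}g(\ve^{-\theta}s)=g(s)$, so that
\[
\wh X_\ve(s)=g(s)\,\frac{X(\ve^{-\theta}s)}{g(\ve^{-\theta}s)}.
\]
Everything therefore hinges on the asymptotics of $X(t)/g(t)$. Here I would invoke the exact growth--rate results of \S\ref{section3}--\S\ref{sec4}, whose hypotheses are exactly assumptions (2) and (3): on the events $A_\pm:=\{\lim_{t\to\infty}X(t)=\pm\infty\}$, which by (2) partition $\Omega$ up to a null set, one has $X(t)/g(t)\to\pm c_\pm^{1/(1-\alpha)}$. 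Consequently, for each fixed $s>0$ (so that $\ve^{-\theta}s\to\infty$),
\[
\wh X_\ve(s)\longrightarrow g(s)\bigl(c_+^{1/(1-\alpha)}\1_{A_+}-c_-^{1/(1-\alpha)}\1_{A_-}\bigr)=:Y(s)\quad\text{a.s.},
\]
which is the process in the statement, since $X^\pm(t)=\pm c_\pm^{1/(1-\alpha)}g(t)$.

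To pass from this pointwise convergence to convergence in $D([0,\infty))$ I would use that the limiting paths $Y$ are continuous, so it suffices to prove uniform convergence on each $[0,T]$. On $[\delta,T]$ the argument $\ve^{-\theta}s\ge\ve^{-\theta}\delta$ is uniformly large, hence $X(\ve^{-\theta}s)/g(\ve^{-\theta}s)$ converges to its limit uniformly in $s$, and since $g(s)\le g(T)$ the difference $\wh X_\ve-Y$ is uniformly small there. The delicate region is $[0,\delta]$, where $g(0)=0$ meets the non-asymptotic behaviour of $X$ near the origin; I would control it through
\[
\sup_{s\in[0,\delta]}|\wh X_\ve(s)|=\ve^{-\rho}\sup_{t\in[0,\ve^{-\theta}\delta]}|X(t)|,
\]
using the bound $|X(t)|\le Cg(t)$ valid for all large $t$ (a byproduct of the growth rate, with the finite random maximum of $|X|$ over an initial segment absorbed once $\ve$ is small) and once more the identity $\ve^{-\rho}g(\ve^{-\theta}\delta)=g(\delta)$, to obtain a bound of order $g(\delta)$, uniform in $\ve$ and tending to $0$ as $\delta\to0$; together with $\sup_{[0,\delta]}|Y|\le Cg(\delta)$ this closes the uniform estimate. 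Then $\wh X_\ve\to Y$ a.s.\ in $D([0,\infty))$, so the laws of $X_\ve$ converge to the law of $Y$, namely $p_+\delta_{X^+}+p_-\delta_{X^-}$ with $p_\pm=\Pb(A_\pm)$.

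I expect the genuine obstacle to lie not in the theorem proper but in the growth--rate statement $X(t)\sim X^\pm(t)$ that it imports from \S\ref{section3}--\S\ref{sec4}; assumptions (2)--(3) are precisely what make it hold, assumption (3) guaranteeing that the self-similar noise is asymptotically negligible against $g$, with the cases $\alpha>0$ and $\alpha\le0$ treated separately as flagged in the introduction. Within the proof of Theorem \ref{thm_main} itself, the only real technical subtlety is the near-origin uniform control described above, which is what upgrades the one-dimensional growth asymptotics into Skorokhod convergence of the paths.
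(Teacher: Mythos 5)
Your proposal follows essentially the same route as the paper: the scaling identity is the paper's Corollary \ref{corl1}, the growth asymptotics $X(t)\sim X^{\pm}(t)$ on $\{\lim_{t\to\infty}X(t)=\pm\infty\}$ are the paper's Theorem \ref{thm2}, and your $[0,\delta]$/$[\delta,T]$ uniform-convergence argument is exactly the content and proof of the paper's Lemma \ref{lem55}. The one point you gloss over is the claim that $A_+$ and $A_-$ partition $\Omega$ up to a null set ``by (2)'': for a c\'{a}dl\'{a}g process, $|X(t)|\to\infty$ does not by itself rule out oscillation between large positive and large negative values, and the paper closes this with Lemma \ref{remk33} (once $|X|$ exceeds a threshold it keeps its sign and diverges), a step you should make explicit.
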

 \begin{remk}
The limit process  equals  $X^+(t)=(c_+(1-\alpha)t)^{\frac{1}{1-\alpha}}$ or $X^-(t)=-(c_-(1-\alpha)t)^{\frac{1}{1-\alpha}}$  with probabilities $$p_\pm=\Pb(\lim_{t\to\infty}X(t)=\pm\infty),$$
respectively.  Functions $X^\pm(t), t\geq0$, are the
 maximal positive solution to the ODE 
$$
y'(t)=c_+ y^\alpha(t), t\geq 0, \ y(0)=0,
$$
and  the minimal negative solution to the ODE
  $$y'(t)=-c_- |y|^\alpha(t), t\geq 0,\  y(0)=0,$$ 
  respectively.
\end{remk}
\begin{remk}\label{remUniq} Assume that $B_\beta$ is a symmetric $\beta^{-1}$ L\'{e}vy stable process and 
 $\alpha+\beta^{-1}>1.$ 
Then equation \eqref{eq_main_00} has a unique (strong) solution.
The case $\alpha>0$ was proved in 
 \cite{TanakaTsuchiya74}. The existence of a  weak solution for $\alpha\leq 0$ follows from \cite{Portenko1994},
 uniqueness was  proved in \cite{ChenWang}.

If $B_\beta$ is a Brownian motion, then $\beta=1/2$, and, see for example \cite{EngelbertSchmidt}, equation
\eqref{eq_main_00} has a unique solution if $\alpha>-1,$ i.e., $\alpha+(1/2)^{-1}>1$.

If $B_\beta$ is a fractional Brownian motion with Hurst parameter $H=\beta$, then  (strong) existence and  uniqueness 
holds for $H\in (0,1)$ and $\alpha\geq 0$, see \cite{NualartOuknine}.
 If $H\in (1,2)$,  it was 
proved  in \cite{NualartOuknine} that (strong) existence uniqueness holds for $\alpha+0.5H^{-1}>1.$ However
their method works for $\alpha+H^{-1}>1.$ Indeed, 
it follows from their estimates that (see \cite[page 110]{NualartOuknine} with their notations)
 $$
\forall \ve>0\ \exists K_1, K_2>0\ \ \int_0^T\beta^2(s)ds\leq\dots\leq K_1 \int_0^T  s^{1-2H+2\alpha(H-\ve)} ds G^{2\alpha}=
$$
$$
K_2 G^{2\alpha}<\infty
$$
  if $1-2H+2\alpha(H-\ve)>-1,$  that is, $\alpha+ (H-\ve)^{-1}>\frac{H}{H-\ve}$.
  The bound $\int_0^T\beta^2(s)ds\leq
K_2 G^{2\alpha}$ was sufficient for existence and uniqueness. Since $\ve>0$ was arbitrary, we have existence and uniqueness  if $\alpha+H^{-1}>1.$
\end{remk}
\begin{remk}\label{remkCond3} Condition 3 of the Theorem is satisfied for a L\'{e}vy stable process or a fractional Brownian motion. Indeed, it follows from \cite{Khintchine,  Kono} that for any $\ve>0$
$$
\lim_{t\to\infty}\frac{B_\beta(t)}{t^{\beta+\ve}}=0\ \ \ \mbox{a.s.}
$$
Let  $M>0$ and $\ve\in (0, \frac{1}{1-\alpha}-\beta)$. Set 
$$
h_{M,\ve}(t):= M+t^{\beta+\ve} =o(t^{\frac{1}{1-\alpha}}), \ t\to\infty,
$$
$$
A_{M,\ve}=\{\forall t\geq 0\ :\ |B_\beta(t)|\leq h_{M,\ve}(t)\}.
$$
 It follows from the Poincar$\acute{e}$  recurrence theorem that 
\be\label{eq230}
\Pb\left(A_{M,\ve}\cap\{\exists N\ \forall n\geq N\ \exists t\geq 0\ :\ |B_\beta(t+n)-B_\beta(n)|\geq h_{M,\ve}(t)\}\right)=0.
\ee
Since $\lim_{M\to+\infty}\Pb(A_{M,\ve})=1,$ \eqref{eq230} implies condition 3 of the Theorem.

Verification of condition 2 for these noises is given in \S\ref{sec4}.
\end{remk}

\section{Transformations of the main equation}\label{sec2}

Let $B_\beta(t), t\geq 0$
be a self-similar process with index $\beta>0.$

By   $X_{x,c,\alpha,\ve}(t)$ denote a (weak) solution of 
\be\label{eq_main}
X(t)=x+\int_0^t (c_+\1_{X(s)\geq 0} -c_-\1_{X(s)< 0})
 |X(s)|^\alpha ds + \ve B_\beta(t).
\ee
 
\begin{lem}
Let $X$
 be a solution of \eqref{eq_main}. Then the process $\wt X(t):=\ve^\delta X(\ve^{-\gamma}t)$ is a solution of
$$
\wt X(t)=x\ve^\delta+ \ve^{\delta(1-\alpha)-\gamma}\int_0^t (c_+\1_{\wt X(s)\geq 0} -c_-\1_{\wt X(s)< 0})
 |\wt X(s)|^\alpha  ds +
$$
\be\label{eq_main1}
\ve^{1+\delta-\gamma\beta} \wt B_\beta(t), t\geq 0,
\ee
where $\wt B_\beta\overset{d}{=}  B_\beta.$

That is, the distributions of 
\newline
\centerline{$\ve^\delta X_{x,c,\alpha,\ve}(\ve^{-\gamma}t), t\geq 0, $ and $X_{x \ve^\delta ,c \ve^{\delta(1-\alpha)-\gamma},\alpha,\ve^{1+\delta-\gamma\beta}}(t), t\geq 0,$}
 \newline
are equal 
if at least one of equations \eqref{eq_main} or  \eqref{eq_main1} has a unique weak solution.

In particular, if $x=0,\  \delta=\frac{1}{(1-\alpha)\beta-1},\ \gamma= \frac{1-\alpha}{(1-\alpha)\beta-1}    $, then $\wt X(t)$ satisfies the equation  {\eqref{eq_dod}.}
\end{lem}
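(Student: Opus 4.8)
The plan is to treat the statement as a direct space-time rescaling, exploiting two structural features of \eqref{eq_main}. First, the drift $a(y):=(c_+\1_{y\geq 0}-c_-\1_{y<0})|y|^\alpha$ is positively homogeneous of degree $\alpha$, meaning $a(\lambda y)=\lambda^\alpha a(y)$ for every $\lambda>0$ (the sign indicators are unchanged by a positive rescaling of the argument, and $|\lambda y|^\alpha=\lambda^\alpha|y|^\alpha$). Second, $B_\beta$ is self-similar of index $\beta$. I would set $\wt X(t):=\ve^\delta X(\ve^{-\gamma}t)$ and verify \eqref{eq_main1} by substituting this into the equation and rewriting each of the three terms.

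For the computation itself I would start from $X(\ve^{-\gamma}t)=x+\int_0^{\ve^{-\gamma}t}a(X(s))\de s+\ve B_\beta(\ve^{-\gamma}t)$ and multiply through by $\ve^\delta$. The initial term becomes $x\ve^\delta$ immediately. In the drift integral I would change variables $s=\ve^{-\gamma}r$, which contributes a Jacobian factor $\ve^{-\gamma}$ and turns $X(\ve^{-\gamma}r)$ into $\ve^{-\delta}\wt X(r)$; applying the homogeneity $a(\ve^{-\delta}\wt X(r))=\ve^{-\delta\alpha}a(\wt X(r))$ and collecting the powers $\ve^\delta\cdot\ve^{-\gamma}\cdot\ve^{-\delta\alpha}=\ve^{\delta(1-\alpha)-\gamma}$ gives exactly the drift coefficient in \eqref{eq_main1}. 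For the noise term I would define $\wt B_\beta(t):=\ve^{\gamma\beta}B_\beta(\ve^{-\gamma}t)$; self-similarity (applied with $a=\ve^{-\gamma}$) gives $\wt B_\beta\overset{d}{=}B_\beta$, while pathwise $\ve^{1+\delta}B_\beta(\ve^{-\gamma}t)=\ve^{1+\delta-\gamma\beta}\wt B_\beta(t)$, which is the remaining coefficient. Assembling the three pieces yields \eqref{eq_main1}.

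The step I expect to require the most care is the passage from this pathwise identity to the distributional statement about weak solutions. The map $X(\cdot)\mapsto\ve^\delta X(\ve^{-\gamma}\cdot)$ is a deterministic bijection on path space, with inverse $\wt X(\cdot)\mapsto\ve^{-\delta}\wt X(\ve^\gamma\cdot)$, and it is compatible with the time-changed filtration $\wt\cF_t:=\cF_{\ve^{-\gamma}t}$, so it sends adapted weak solutions of \eqref{eq_main} to adapted weak solutions of \eqref{eq_main1} and vice versa; in particular the noise hypotheses (L\'evy, respectively fBm, with respect to the relevant filtration) are preserved, since the transformation only rescales time and amplitude. Consequently, weak uniqueness for either equation is equivalent to weak uniqueness for the other. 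Both $\ve^\delta X_{x,c,\alpha,\ve}(\ve^{-\gamma}\cdot)$ and $X_{x\ve^\delta,c\ve^{\delta(1-\alpha)-\gamma},\alpha,\ve^{1+\delta-\gamma\beta}}(\cdot)$ are weak solutions of \eqref{eq_main1} (the second by definition of the notation, the first by the computation above), so if that equation---or equivalently \eqref{eq_main}---has a unique weak solution, the two processes share the same law. Here I would be careful that self-similarity supplies only equality in distribution, not a pathwise realization, which is exactly why the conclusion is phrased at the level of laws.

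Finally, for the \emph{``in particular''} assertion I would substitute $x=0$, $\delta=\tfrac{1}{(1-\alpha)\beta-1}$, $\gamma=\tfrac{1-\alpha}{(1-\alpha)\beta-1}$ and check that both scaling exponents vanish: writing $D:=(1-\alpha)\beta-1$, one has $\delta(1-\alpha)-\gamma=\tfrac{1-\alpha}{D}-\tfrac{1-\alpha}{D}=0$ and $1+\delta-\gamma\beta=1+\tfrac{1-(1-\alpha)\beta}{D}=1+\tfrac{-D}{D}=0$. Hence the drift and noise coefficients in \eqref{eq_main1} both equal $1$ and the initial value is $0$, so $\wt X$ solves \eqref{eq_dod}.
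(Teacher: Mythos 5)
Your proposal is correct and follows essentially the same route as the paper's proof: multiply the equation for $X(\ve^{-\gamma}t)$ by $\ve^\delta$, change variables in the drift integral, use the degree-$\alpha$ homogeneity of the drift and the self-similarity of $B_\beta$ via $\wt B_\beta(t)=\ve^{\gamma\beta}B_\beta(\ve^{-\gamma}t)$. Your additional remarks on the passage to laws and the verification that both exponents vanish in the ``in particular'' case are accurate and merely make explicit what the paper leaves implicit.
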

\begin{proof}
$$
\wt X(t)=\ve^\delta X(\ve^{-\gamma}t)=
x\ve^\delta + \ve^\delta \int_0^{\ve^{-\gamma}t } (c_+\1_{X(s)\geq 0} -c_-\1_{X(s)< 0})
 |X(s)|^\alpha  ds +
$$
$$
\ve^\delta \ve B_\beta(\ve^{-\gamma}t)=
$$
$$
x\ve^\delta + \ve^\delta \int_0^{t} (c_+\1_{ X(z\ve^{-\gamma})\geq 0} -c_-\1_{X(z\ve^{-\gamma})< 0})
 |X(z\ve^{-\gamma})|^\alpha dz\ve^{-\gamma} +
$$
$$
+ \ve^\delta \ve\;  \ve^{-\gamma \beta} \ve^{\gamma \beta}B_\beta(\ve^{-\gamma}t)=
$$
$$
x\ve^\delta + \ve^\delta \ve^{-\gamma} \ve^{-\delta\alpha}\int_0^{t} (c_+\1_{\wt X(z )\geq 0} -c_-\1_{\wt X(z )< 0})
 |\wt X(z)|^\alpha  dz  +
$$
$$
+ \ve^{1+\delta  -\gamma \beta}  \wt B_\beta(t),
$$
where $\wt B_\beta(t)= \ve^{\gamma \beta} B_\beta(\ve^{-\gamma}t).$
\end{proof}
\begin{corl}\label{corl1}
Let $  X$ be a solution of \eqref{eq_main_00}, $X_\ve$ be a solution of \eqref{eq_main_0}.   Then
$$
\{X_\ve(t), t\geq 0\}\overset{d}{=}  \{\ve^{\frac{-1}{ (1-\alpha)\beta-1}}   X (\ve^{\frac{1-\alpha}{ (1-\alpha)\beta-1}}t), t\geq 0\}
$$
 if at least one of equations \eqref{eq_main_00} or  \eqref{eq_main_0} has a unique weak solution.
\end{corl}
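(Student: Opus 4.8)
The plan is to obtain the corollary as a direct specialisation of the preceding Lemma. Recall that $X_\ve$ solves \eqref{eq_main_0}, which is \eqref{eq_main} with $x=0$ and noise amplitude $\ve$; so I would apply the Lemma to this $X_\ve$ with the distinguished exponents $\delta=\frac{1}{(1-\alpha)\beta-1}$ and $\gamma=\frac{1-\alpha}{(1-\alpha)\beta-1}$. These are precisely the values for which the drift exponent $\delta(1-\alpha)-\gamma$ and the noise exponent $1+\delta-\gamma\beta$ appearing in \eqref{eq_main1} both vanish, while the transformed initial value $x\ve^{\delta}$ stays $0$. Consequently the transformed process $\wt X(t):=\ve^{\delta}X_\ve(\ve^{-\gamma}t)$ satisfies \eqref{eq_dod}, i.e.\ the unit-noise equation \eqref{eq_main_00}, exactly as recorded in the last sentence of the Lemma.

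Next I would invoke the weak-uniqueness hypothesis. By assumption at least one of \eqref{eq_main_0} or \eqref{eq_main_00} admits a unique weak solution, which is exactly the condition under which the Lemma asserts equality of the two laws; hence the distribution of $\wt X$ on $D([0,\infty))$ coincides with that of the solution $X$ of \eqref{eq_main_00}. In symbols, this gives $\ve^{\delta}X_\ve(\ve^{-\gamma}\,\cdot\,)\overset{d}{=}X$ as processes.

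Finally I would invert the space-time scaling: substituting $t\mapsto\ve^{\gamma}t$ and multiplying by $\ve^{-\delta}$ converts the previous relation into $X_\ve(t)\overset{d}{=}\ve^{-\delta}X(\ve^{\gamma}t)$, and since $-\delta=\frac{-1}{(1-\alpha)\beta-1}$ and $\gamma=\frac{1-\alpha}{(1-\alpha)\beta-1}$, this is precisely the claimed identity. The only point that genuinely needs care — and the step I would spell out — is that the Lemma delivers an equality of \emph{distributions of processes}, not of individual trajectories, so I must observe that post-composing with the fixed deterministic map $f\mapsto\ve^{-\delta}f(\ve^{\gamma}\,\cdot\,)$, which is measurable on path space, pushes the two equal laws forward to equal laws and thus preserves $\overset{d}{=}$. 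Beyond this bookkeeping I expect no real obstacle.
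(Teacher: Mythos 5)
Your proposal is correct and matches the paper's intent exactly: the corollary is stated without a separate proof precisely because it is the specialisation of the preceding Lemma to $x=0$, $\delta=\frac{1}{(1-\alpha)\beta-1}$, $\gamma=\frac{1-\alpha}{(1-\alpha)\beta-1}$ (which kill the drift and noise exponents), followed by the weak-uniqueness identification and the inversion of the deterministic space-time scaling. Your extra remark that the scaling map is measurable on path space and hence preserves equality of laws is a harmless piece of bookkeeping the paper leaves implicit.
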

\section{Asymptotic behavior of perturbed integral equation}\label{section3}

Let $c>0, x>0$. In this section we find
sufficient conditions   ensuring that a solution to the integral equation
\be\label{eq198}
y(t)=x+ c\int_0^t|y(s)|^\alpha ds +g(t)
 \ee
is equivalent as $t\to\infty$ to the solution of  
$$
z(t)= x+ c\int_0^t|z(s)|^\alpha ds 
 $$
if the function $g$ has comparatively small growth on infinity. 
I.e., we will find conditions on growth of $g$ that suffices
\be\label{eq_asympt_y_z}
y(t)\sim(c(1-\alpha)t)^{\frac{1}{1-\alpha}}, \ t\to\infty.
\ee

Let $X$ be a solution of \eqref{eq_main_00}. Under  natural integrability assumptions \cite{Khintchine,  Kono, Takashima89}, self-similar process $B_\beta$ satisfies the following growth condition
$$
\forall \delta>0 \ \ \lim_{t\to\infty}\frac{B_\beta(t)}{t^{\beta+\delta}}=0\ \ \mbox{a.s.}
$$

So, it is natural to expect that if $\omega\in\Omega$ is such that 
$X(t)=X(t,\omega)\to+\infty, t\to\infty$, then $X(t,\omega)\sim(c_+(1-\alpha)t)^{\frac{1}{1-\alpha}}, \ t\to\infty$ (or $X(t)\sim-(c_-(1-\alpha)t)^{\frac{1}{1-\alpha}}$ if $\lim_{t\to\infty}X(t)=-\infty$, respectively).
We  apply deterministic result on  equivalence \eqref{eq_asympt_y_z} to the 
stochastic equation \eqref{eq_main_0} in the next section. 

We consider the cases $\alpha\in(0,1)$ and $\alpha\in(-1,0]$ separately. In these cases the function $x^\alpha$ is increasing or decreasing, respectively. This has an effect on properties of the solution and the course of the proof.


{We will always assume that all functions below are measurable and locally bounded.}

\subsection{Case $\alpha\in(0,1)$}
In order to study the asymptotic behavior of $y(t)$ we need few simple  auxiliary lemmas.
\begin{lem}\label{lem2}
Let $a:\mbR\to\mbR$ be a non-decreasing, locally Lipschitz function of linear growth, $g_1, g_2:[0,T]\to \mbR$ be  bounded measurable functions,
$$
y_i(t) = x_i+ \int_0^t a(y_i(s))ds + g_i(t), t\in [0,T], \ i=1,2.
$$
Assume that $x_1\leq x_2,  g_1(t)\leq g_2(t), t\in [0,T].$ Then $y_1(t)\leq y_2(t), t\in [0,T].$
\end{lem}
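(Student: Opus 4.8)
The plan is to reduce the assertion to a Gronwall inequality for the positive part of the difference $w:=y_1-y_2$. First I would record that both solutions are bounded on $[0,T]$: since $a$ has linear growth, say $|a(u)|\le C(1+|u|)$, and the $g_i$ are bounded, the estimate $|y_i(t)|\le |x_i|+\sup_{[0,T]}|g_i|+\int_0^t C(1+|y_i(s)|)\de s$ together with Gronwall's lemma yields $\sup_{[0,T]}\bigl(|y_1|+|y_2|\bigr)=:R<\infty$. Consequently $a$ is Lipschitz on $[-R,R]$; let $L$ denote the corresponding Lipschitz constant, so that $|a(u)-a(v)|\le L|u-v|$ for all values taken by $y_1,y_2$ on $[0,T]$.

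Next I would write the difference in integral form. Setting $\psi(t):=(x_1-x_2)+(g_1(t)-g_2(t))$, the hypotheses $x_1\le x_2$ and $g_1\le g_2$ give $\psi(t)\le 0$, while
\[
w(t)=\psi(t)+\int_0^t\bigl(a(y_1(s))-a(y_2(s))\bigr)\de s .
\]
The crucial pointwise estimate is $a(y_1(s))-a(y_2(s))\le L\,w^+(s)$, where $w^+:=\max(w,0)$. On the set $\{w(s)>0\}$ this follows from the Lipschitz bound, since there $|a(y_1(s))-a(y_2(s))|\le L|w(s)|=L\,w^+(s)$; on $\{w(s)\le 0\}$ one has $y_1(s)\le y_2(s)$, so the monotonicity of $a$ forces $a(y_1(s))-a(y_2(s))\le 0=L\,w^+(s)$. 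This is the one place where the assumption that $a$ is non-decreasing is genuinely used.

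Combining the two displays and using $\psi\le 0$, I obtain $w(t)\le L\int_0^t w^+(s)\de s$ for every $t$, and since the right-hand side is nonnegative the same bound holds with $w$ replaced by $w^+$:
\[
w^+(t)\le L\int_0^t w^+(s)\de s,\qquad t\in[0,T].
\]
Because $w^+$ is bounded and measurable, Gronwall's inequality gives $w^+\equiv 0$, that is $y_1(t)\le y_2(t)$ on $[0,T]$, as claimed.

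The routine steps are the boundedness estimate and the final Gronwall argument; the only genuinely delicate point is the sign analysis yielding $a(y_1)-a(y_2)\le L\,w^+$, where the Lipschitz bound and the monotonicity of $a$ must be invoked on the complementary regions $\{w>0\}$ and $\{w\le 0\}$ respectively. I would emphasize that discontinuities of the $g_i$ cause no difficulty: the integral term is Lipschitz continuous irrespective of the regularity of $g_i$, and the whole argument is carried out through the integral inequality rather than through a differential or crossing argument, so the possible irregularity of $w$ is harmless.
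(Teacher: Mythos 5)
Your proof is correct, but it follows a genuinely different route from the paper. The paper's own argument is a one-line monotone-iteration proof: since $a$ is locally Lipschitz of linear growth, each equation has a unique solution obtainable as the limit of Picard iterations $y_i^{(n+1)}(t)=x_i+\int_0^t a(y_i^{(n)}(s))\,ds+g_i(t)$; because $a$ is non-decreasing and $x_1\le x_2$, $g_1\le g_2$, the inequality $y_1^{(n)}\le y_2^{(n)}$ propagates through the iteration and survives in the limit. You instead compare the two given solutions directly via a Gronwall estimate on $w^+=(y_1-y_2)^+$, with the key pointwise bound $a(y_1)-a(y_2)\le L\,w^+$ obtained from the local Lipschitz property on $\{w>0\}$ and from monotonicity on $\{w\le 0\}$; all the steps (a priori boundedness, the sign analysis, the final Gronwall iteration for a bounded measurable $w^+$) check out. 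The paper's approach is shorter but leans on uniqueness and on the solutions being constructed as iteration limits; yours is self-contained, applies to any pair of solutions without invoking how they were produced, and isolates precisely where the monotonicity of $a$ is indispensable --- consistent with the paper's remark that this hypothesis cannot be dropped. Either argument is acceptable here.
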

Assumptions of the lemma yield that the solutions are unique and can be obtained by iterations. The  corresponding  inequality obviously is satisfied for iterations, so it is satisfied for their limits too.
\begin{remk}
The assumption on monotonicity of $a$ cannot be omitted.
\end{remk}
As a corollary of Lemma \ref{lem2} we get the following result.
\begin{lem}\label{lem3}
Let  $y_1(t)\geq \ve>0, t\in[0,T].$ Assume that $y_1(t), y_2(t), t\in[0,T]$ are such that
$$
y_1(t) \leq x_1+ c \int_0^t y^\alpha_1(s) ds + g_1(t), t\in [0,T], 
$$
and
$$
y_2(t) \geq x_2+ c \int_0^t y^\alpha_2(s)  ds + g_2(t), t\in [0,T], \ i=1,2,
$$
where $\alpha\in(0,1)$, $g_1, g_2:[0,T]\to \mbR$  are bounded measurable functions.

If $x_1\leq x_2,  g_1(t)\leq g_2(t), t\in [0,T]$, then $y_1(t)\leq y_2(t), t\in [0,T].$
\end{lem}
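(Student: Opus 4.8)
The plan is to deduce Lemma~\ref{lem3} from Lemma~\ref{lem2} by constructing an auxiliary comparison equation whose drift is genuinely non-decreasing and Lipschitz, so that the monotone comparison principle of Lemma~\ref{lem2} applies directly. The difficulty is that $y_1$ and $y_2$ in Lemma~\ref{lem3} are only assumed to satisfy inequalities (sub- and super-solutions), whereas Lemma~\ref{lem2} concerns genuine solutions of integral equations with a globally non-decreasing, locally Lipschitz drift of linear growth; the function $x\mapsto c|x|^\alpha$ with $\alpha\in(0,1)$ is neither monotone on all of $\mbR$ nor Lipschitz near $0$. The key observation enabling the reduction is the hypothesis $y_1(t)\ge\ve>0$, which confines the relevant argument of the drift to the region $[\ve,\infty)$ where $x\mapsto cx^\alpha$ is increasing and smooth.

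First I would introduce a modified drift $a(x)$ that agrees with $c x^\alpha$ on $[\ve,\infty)$ but is extended to all of $\mbR$ as a non-decreasing, locally Lipschitz function of linear growth. The natural choice is
$$
a(x)=
\begin{cases}
c\,\ve^\alpha+c\alpha\ve^{\alpha-1}(x-\ve), & x<\ve,\\
c\,x^\alpha, & x\ge\ve,
\end{cases}
$$
i.e. continue $cx^\alpha$ below $\ve$ by its tangent line at $\ve$. Since $cx^\alpha$ is $C^1$ and increasing on $[\ve,\infty)$ with derivative bounded on compacts, this $a$ is non-decreasing, locally Lipschitz, and of linear growth (the linear piece has slope $c\alpha\ve^{\alpha-1}$, and on $[\ve,\infty)$ one has $cx^\alpha\le c x$ for large $x$). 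I would then let $\wt y_2$ solve the genuine integral equation $\wt y_2(t)=x_2+\int_0^t a(\wt y_2(s))\,ds+g_2(t)$, which by Lemma~\ref{lem2}'s hypotheses has a unique iterative solution.

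Next I would run the comparison in two steps. Because $y_1(t)\ge\ve$, on the range of $y_1$ the modified drift $a$ coincides with $c y_1^\alpha$, so the sub-solution inequality for $y_1$ reads $y_1(t)\le x_1+\int_0^t a(y_1(s))\,ds+g_1(t)$; comparing this sub-solution against the genuine solution $\wt y_2$ (with $x_1\le x_2$, $g_1\le g_2$, and $a$ non-decreasing) yields $y_1(t)\le\wt y_2(t)$ by the standard sub-/super-solution half of the comparison argument — which is exactly the monotone-iteration reasoning sketched after Lemma~\ref{lem2}. Separately, since $y_2$ is a super-solution for the same drift relation, $\wt y_2(t)\le y_2(t)$ follows once one checks that $a(x)\le cx^\alpha$ for $x\ge\ve$ (here in fact equality) and that the super-solution inequality for $y_2$ is compatible with the modified drift on the relevant range. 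Chaining the two inequalities gives $y_1(t)\le\wt y_2(t)\le y_2(t)$ on $[0,T]$.

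The main obstacle I expect is making the comparison principle precise for sub- and super-solutions of differential inequalities rather than equalities, since Lemma~\ref{lem2} as stated compares only genuine solutions. Concretely, one must verify that the monotone iteration $z^{(0)}\equiv x_1+g_1$, $z^{(n+1)}(t)=x_1+\int_0^t a(z^{(n)}(s))\,ds+g_1(t)$ stays below $\wt y_2$ at every step and converges to a solution dominating $y_1$ from above; this requires knowing that a sub-solution lies below the iterates, which uses the monotonicity of $a$ crucially and a short Gronwall-type estimate to control the approximations. The hypothesis $y_1\ge\ve>0$ is what keeps everything inside the region where $a$ is well-behaved, so I would be careful to use it at the exact point where the sub-solution inequality is rewritten in terms of $a$ rather than $c|\cdot|^\alpha$. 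Once the sub-/super-solution comparison is established in this slightly extended form, the conclusion $y_1\le y_2$ is immediate.
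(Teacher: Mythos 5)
Your overall strategy --- replace $c x^{\alpha}$ off $[\ve,\infty)$ by a non-decreasing, locally Lipschitz drift of linear growth and then invoke the monotone comparison of Lemma~\ref{lem2}, extended to sub-/super-solutions --- is exactly the reduction the paper intends (it offers no further proof than ``corollary of Lemma~\ref{lem2}''), and the sub-solution half of your argument is sound: since $y_1\ge\ve$, the inequality for $y_1$ can be rewritten with the modified drift, and the monotone iteration dominates a sub-solution. The gap is in the second link of your chain $y_1\le\wt y_2\le y_2$. Because $x\mapsto cx^{\alpha}$ is \emph{concave} on $(0,\infty)$, the tangent line at $\ve$ lies \emph{above} the function on $(0,\ve)$ (indeed your $a$ has $a(0)=c\ve^{\alpha}(1-\alpha)>0=\lim_{x\to0+}cx^{\alpha}$), so the extension satisfies $a(x)\ge cx^{\alpha}$ there, not $a(x)\le cx^{\alpha}$. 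Consequently the hypothesis $y_2(t)\ge x_2+c\int_0^t y_2^{\alpha}(s)\,ds+g_2(t)$ does \emph{not} imply $y_2(t)\ge x_2+\int_0^t a(y_2(s))\,ds+g_2(t)$ unless one already knows $y_2\ge\ve$ --- but only $y_1$ is assumed bounded below by $\ve$, and $y_2\ge y_1\ge\ve$ is essentially the conclusion being proved. Your phrase ``compatible with the modified drift on the relevant range'' hides exactly this circularity.

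The repair is small. Either choose a \emph{minorant} extension, e.g.\ the chord through the origin,
$$
a(x)=\begin{cases} c\,\ve^{\alpha-1}x, & x<\ve,\\ c\,x^{\alpha}, & x\ge\ve,\end{cases}
$$
which is still non-decreasing, Lipschitz and of linear growth, and by concavity satisfies $a(x)\le cx^{\alpha}$ on $(0,\ve)$ and $a(x)\le 0\le c|x|^{\alpha}$ for $x\le0$, so that $y_2$ genuinely is a super-solution for the drift $a$; or dispense with the auxiliary $\wt y_2$ altogether and estimate $D=(y_1-y_2)^{+}$ directly: by the three-chord inequality for the concave function $x\mapsto x^{\alpha}$ one has $y_1^{\alpha}-y_2^{\alpha}\le \ve^{\alpha-1}(y_1-y_2)$ whenever $y_1\ge\ve$ and $0\le y_2<y_1$, whence $D(t)\le c\,\ve^{\alpha-1}\int_0^tD(s)\,ds$ and Gronwall gives $D\equiv0$. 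With either correction your argument closes.
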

\begin{thm}\label{lem5}
Assume that $y(t)$ satisfies the equation
 $$
 y(t) = x + c \int_{0}^t |y(s)|^\alpha ds + g(t), t\geq 0,
 $$
  where $c>0, \alpha\in(0,1), g:[0,\infty)\to \mbR$ is a measurable locally bounded function.
Assume that $\lim_{t\to\infty} y(t)=+\infty$ and there exists a sequence $\{t_n\}$, $\lim_{n\to\infty} t_n=\infty$ and  a function $ h(t)=o(t^ {\frac{1}{1-\alpha}})$, $ t\to\infty,$
 such that
\be\label{eq_qqq}
|g(t)-g(t_n)|\leq h(t-t_n), \ t\geq t_n.
\ee
Then
$$
{y(t)}\sim {(c(1-\alpha)t)^{\frac{1}{1-\alpha}}}, t\to\infty.
$$


\end{thm}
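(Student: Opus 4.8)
The plan is to show the asymptotic equivalence by squeezing $y(t)$ between two deterministic comparison solutions whose growth rate is $(c(1-\alpha)t)^{1/(1-\alpha)}$, exploiting the monotonicity comparison principle from Lemma~\ref{lem3}. Since $\alpha\in(0,1)$, the drift $x\mapsto c|x|^\alpha$ is increasing on the region where $y$ is large and positive, which is precisely the setting in which Lemma~\ref{lem3} applies. The key observation is that the hypothesis \eqref{eq_qqq} says the increments of $g$ after the random time $t_n$ are controlled by the sublinear-growth function $h(t-t_n)=o(t^{1/(1-\alpha)})$; this lets me restart the equation from $t_n$ and treat $g$ as a genuinely small perturbation of the pure ODE on $[t_n,\infty)$.

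First I would fix a large $n$ and write, for $t\geq t_n$, the integral equation started at $t_n$ from value $y(t_n)$: since $\lim_{t\to\infty}y(t)=+\infty$, I may choose $n$ so large that $y(t)\geq \ve>0$ for all $t\geq t_n$, which is the positivity hypothesis needed in Lemma~\ref{lem3} and also lets me drop the absolute value, replacing $|y|^\alpha$ by $y^\alpha$. Using \eqref{eq_qqq}, for $t\geq t_n$ I bound
$$
y(t)\le y(t_n)+c\int_{t_n}^t y^\alpha(s)\,ds + h(t-t_n),\qquad
y(t)\ge y(t_n)+c\int_{t_n}^t y^\alpha(s)\,ds - h(t-t_n).
$$
Then I introduce the two comparison functions $z^\pm$ solving
$$
z^\pm(t)=y(t_n)\pm 2h(t-t_n)+c\int_{t_n}^t (z^\pm(s))^\alpha\,ds
$$
(with a fixed lower cutoff to stay positive and keep the drift Lipschitz, so Lemma~\ref{lem3} is applicable), and conclude $z^-(t)\le y(t)\le z^+(t)$ for $t\ge t_n$.

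The remaining work is to show that each comparison solution $z^\pm$ satisfies $z^\pm(t)\sim(c(1-\alpha)t)^{1/(1-\alpha)}$ as $t\to\infty$. This is a deterministic asymptotic analysis of an ODE-type integral equation perturbed by a sublinear forcing term $h$: because $h(t-t_n)=o(t^{1/(1-\alpha)})$ while the unforced solution grows exactly like $t^{1/(1-\alpha)}$, the perturbation is asymptotically negligible. I expect to establish this by differentiating to get $(z^\pm)'(t)=c(z^\pm(t))^\alpha\pm 2h'(t-t_n)$ where $h$ is differentiable, or more robustly by a direct integral estimate: setting $w=(z^\pm)^{1-\alpha}$ one finds $w(t)/t\to c(1-\alpha)$, and the contribution of $h$ to $w$ is $o(t)$ by the growth bound on $h$. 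Taking $n\to\infty$ then pins the two bounds together and yields the claimed equivalence.

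The main obstacle I anticipate is the final deterministic step: rigorously showing that the sublinear additive perturbation $h$ does not alter the leading-order growth rate $t^{1/(1-\alpha)}$ of the comparison solutions. The subtlety is that $h$ is only assumed $o(t^{1/(1-\alpha)})$, not small relative to the drift integral pointwise, so I cannot simply absorb it into the initial condition; instead I must track how the perturbation propagates through the nonlinear feedback $c z^\alpha$ and verify it remains subdominant. Converting to the variable $w=z^{1-\alpha}$, which linearizes the leading behavior into $w'\approx c(1-\alpha)$, should tame this, but care is needed to confirm that the error terms coming from $h$ contribute only $o(t)$ to $w$, uniformly enough to survive the limit.
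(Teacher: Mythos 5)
Your first step --- restarting the equation at $t_n$, using \eqref{eq_qqq} to turn $g$ into a forcing bounded by $\pm h(t-t_n)$, and invoking the comparison Lemma~\ref{lem3} to squeeze $y$ between sub- and supersolutions --- is exactly how the paper begins. The gap is in your second step. Your comparison functions $z^{\pm}$ solve the \emph{perturbed} equation $z^{\pm}(t)=y(t_n)\pm 2h(t-t_n)+c\int_{t_n}^{t}(z^{\pm}(s))^{\alpha}\,ds$, so proving $z^{\pm}(t)\sim(c(1-\alpha)t)^{1/(1-\alpha)}$ is a problem of the same type as the theorem itself; the reduction buys you only that the forcing now has a definite sign. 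Your proposed resolutions do not close it: differentiating requires $h'$, which need not exist (the $h$ actually used in the paper is $M+t^{\beta+\ve}$ with $h(0)=M>0$, and in general $h$ is only measurable and locally bounded), and the substitution $w=(z^{\pm})^{1-\alpha}$ does not linearize the \emph{integral} equation, since the forcing enters additively in $z$, not in $w$. The upper bound can in fact be obtained by the move you dismiss --- on a window $[0,T]$ absorb $2\sup_{s\le T}h(s)$ into the initial condition, note $\bigl(y(t_n)+2\sup_{s\le T}h(s)\bigr)^{1-\alpha}=o(T)$, and solve the unperturbed ODE exactly --- but the lower bound cannot be handled this way, because subtracting a large constant from the initial condition destroys positivity; that is where your sketch genuinely fails.

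The paper avoids analyzing the perturbed comparison equation altogether. For any $c_-<c$ it takes the \emph{exact} unperturbed solution $\wt z^{(c_-)}(t)=(1+c_-(1-\alpha)t)^{\frac{1}{1-\alpha}}$ and checks that it is a subsolution of the inequality satisfied by $z_n(t)=y(t_n+t)$: writing $c_-=c+(c_--c)$, the slack $(c_--c)\int_0^t(\wt z^{(c_-)}(s))^{\alpha}\,ds$ equals a negative constant times $t^{\frac{1}{1-\alpha}}$ up to bounded terms, which eventually dominates $-h(t)=o(t^{\frac{1}{1-\alpha}})$, yielding $\wt z^{(c_-)}(t)\le K(c_-)+c\int_0^t(\wt z^{(c_-)}(s))^{\alpha}\,ds-h(t)$. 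Since $y(t_n)\to+\infty$, for large $n$ Lemma~\ref{lem3} gives $y(t_n+t)\ge\wt z^{(c_-)}(t)$, hence $\liminf_{t\to\infty} y(t)/(c(1-\alpha)t)^{\frac{1}{1-\alpha}}\ge(c_-/c)^{\frac{1}{1-\alpha}}$; the symmetric argument with $c_+>c$ gives the upper bound, and letting $c_{\mp}\to c$ finishes. This is the idea your proposal is missing: trade the additive perturbation $h$ for a multiplicative perturbation of the constant $c$, which the power-law solutions absorb exactly.
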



 
\begin{remk} Assumption $\lim_{t\to\infty} y(t)=+\infty$ cannot be omitted. Indeed, $y(t)=0, t\geq 0, $ satisfies the equation with $x=0, g\equiv 0.$
The condition $
|g(t)-g(t_n)|\leq h(t-t_n), \ t\geq t_n
$ also cannot be replaced by $g(t)=o(t^ {\frac{1}{1-\alpha}}), t\to\infty$. Indeed, let $\gamma\in(0,\frac{1}{1-\alpha}), y(t)=t^\gamma,  {t_0=0, x=0}.$  Then
$$
y(t)=\int_0^t (s^\gamma)^\alpha ds -\int_0^t (s^\gamma)^\alpha ds+t^\gamma= 
$$
$$
=\int_{ {0}}^t |y(s)|^\alpha ds -\frac{t^{\alpha\gamma+1}}{\alpha\gamma+1}+t^\gamma=\int_{ {0}}^t |y(s)|^\alpha ds +g(t),
$$
where $g(t)=-\frac{t^{\alpha\gamma+1}}{\alpha\gamma+1}+t^\gamma$.

Since $\alpha\gamma+1<\frac{\alpha}{1-\alpha}+1=\frac{1}{1-\alpha}$, we have $g(t)=o(t^{\frac{1}{1-\alpha}})$ and 
$f(t)=o(t^{\frac{1}{1-\alpha}})$ as $t\to\infty.$
\end{remk}
\begin{proof}[Proof of Theorem \ref{lem5}] 
We have 
$$
y(t) = y(t_n) + c \int_{t_n}^t |y(s)|^\alpha ds + g(t)-g(t_n)\geq 
  y(t_n) + c \int_{t_n}^t |y(s)|^\alpha ds + h(t-t_n), t\geq t_n.
$$
Set $z_n(t)=y(t+t_n), x_n=y(t_n).$ Then
\be\label{eq257}
z_n(t)\geq x_n+ c\int_0^t |z_n(s)|^\alpha ds - h(t), \ t\geq 0.
\ee
For $a>0, t\geq 0$ denote $\wt z^{(a)}(t)=(1+a(1-\alpha)t)^{\frac{1}{1-\alpha}}.$
 The function  $\wt z^{(a)}$ satisfies the equation
$$
\wt z^{(a)}(t)=1+a\int_0^t  \wt z^{(a)}(s)^\alpha ds.
$$
Let $c_-<c.$
Then
$$
\wt z^{(c_-)}(t)=1+c_-\int_0^t \wt z^{(c_-)}(s)^\alpha ds=
$$
$$
1+c\int_0^t \wt z^{(c_-)}(s)^\alpha ds+(c_--c)\int_0^t  \wt z^{(c_-)}(s)^\alpha ds=
$$
$$
=1+c\int_0^t \wt z^{(c_-)}(s)^\alpha ds+
(c_--c)\left((1+c_-(1-\alpha)t)^{\frac{1}{1-\alpha}}-1\right)\leq 
$$
\be\label{eq276}
\leq K(c_-)+c\int_0^t \wt z^{(c_-)}(s)^\alpha ds-h(t), t\geq 0,
\ee
where $K(c_-)$ is a constant.

It follows from \eqref{eq257}, \eqref{eq276}, and Lemma \ref{lem3} that if $n$ is sufficiently large,
 then $y(t_n+t)=z_n(t)\geq {\wt z^{(c_-)}(t)}, t\geq 0.$ So
$$
 \overline{\lim_{t\to\infty}}\frac{y_n(t)}{(1+c_-(1-\alpha)t)^{\frac{1}{1-\alpha}}}\geq 1.
$$
Therefore
$$
\forall c_-<c\ \ \ \ \  \overline{ \lim_{t\to\infty}}\frac{y(t)}{(c(1-\alpha)t)^{\frac{1}{1-\alpha}}}\geq \frac{c_-^{\frac{1}{1-\alpha}}}{c^{\frac{1}{1-\alpha}}}.
$$
Similarly we get the inequality
$$
\forall c_+>c\ \ \ \ \  \underline{ \lim}_{t\to\infty}\frac{y(t)}{(c(1-\alpha)t)^{\frac{1}{1-\alpha}}}\leq \frac{c_+^{\frac{1}{1-\alpha}}}{c^{\frac{1}{1-\alpha}}}.
$$
Since $c_-<c$ and $c_+>c$ were arbitrary, this proves the theorem.

\end{proof}



\subsection{Case $\alpha\in(-1,0]$}
In this subsection we  assume that solution of \eqref{eq198} is positive  for all $t\geq0$.
\begin{lem}\label{lemma2.1}
Let $\ve\in(0,1),  a>1, \Delta\in (0,\frac{1+\alpha}{-\alpha})\cap (0,  \frac1\ve-1)  $ be arbitrary. 
There exists $\delta=\delta(\ve, a, \Delta)>0$ depending only on $\ve, a, \Delta$ such that if
$$
  |g(t)|\leq \delta t^{\frac{1}{1-\alpha}}, \ t\in[a^n,a^{n+1}],
$$
for some $n\geq 1$ and $y$ is a solution of \eqref{eq198} such that
\be\label{eq1.2}
(1-\ve)(c(1-\alpha)a^n)^{\frac1{1-\alpha}}<y(a^n)<(1+\ve)(c(1-\alpha)a^n)^{\frac1{1-\alpha}},
\ee
then for all $t\in [a^n,a^{n+1}]$ we have
\be\label{eq1.3}
(1-\ve(1+\Delta))(c(1-\alpha)a^n)^{\frac1{1-\alpha}}<y(t)<(1+\ve)(c(1-\alpha)a^{n+1})^{\frac1{1-\alpha}},
\ee
and
\be\label{eq1.4}
(1-\ve)(c(1-\alpha)a^{n+1})^{\frac1{1-\alpha}}<y(a^{n+1}).
\ee
\end{lem}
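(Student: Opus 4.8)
The plan is to work on the single interval $[a^n,a^{n+1}]$ and to sandwich $y$ between two explicit solutions of the \emph{unperturbed} equation $u'=cu^\alpha$, exploiting that for $\alpha\in(-1,0]$ the drift $u\mapsto cu^\alpha$ is non-increasing, which makes the scalar comparison principle especially favourable (Lemmas \ref{lem2}--\ref{lem3} are stated for increasing drift, so I would supply the comparison directly). Write $z_n:=(c(1-\alpha)a^n)^{\frac1{1-\alpha}}$, so that $z_n^{1-\alpha}=c(1-\alpha)a^n$ and $z_{n+1}=a^{\frac1{1-\alpha}}z_n$; the assumption is exactly $(1-\ve)z_n<y(a^n)<(1+\ve)z_n$. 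I first split off the continuous part by setting $w(t):=y(a^n)+c\int_{a^n}^t y(s)^\alpha\,ds$, so that $y(t)=w(t)+r(t)$ with $r(t):=g(t)-g(a^n)$; from $|g(t)|\le \delta t^{\frac1{1-\alpha}}$ and $(a^n)^{\frac1{1-\alpha}}=z_n/(c(1-\alpha))^{\frac1{1-\alpha}}$ one gets $|r(t)|\le\rho$ with $\rho=C\delta z_n$, $C=C(a,c,\alpha)$. The decisive structural point for uniformity in $n$ is that every quantity below scales like $z_n$, so the smallness thresholds imposed on $\delta$ will not depend on $n$.

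For the lower bound, since $\alpha<0$ I have $y(s)^\alpha=(w(s)+r(s))^\alpha\ge(w(s)+\rho)^\alpha$, hence $w'\ge c(w+\rho)^\alpha$ a.e. Comparing with the solution $\psi$ of $\psi'=c(\psi+\rho)^\alpha$, $\psi(a^n)=y(a^n)$ — the comparison being immediate because $F(u)=c(u+\rho)^\alpha$ is decreasing, so at any point where $w<\psi$ one has $w'\ge F(w)>F(\psi)=\psi'$, which prevents $w$ from dropping below $\psi$ — gives $w\ge\psi$, that is $y(t)\ge -2\rho+((y(a^n)+\rho)^{1-\alpha}+c(1-\alpha)(t-a^n))^{\frac1{1-\alpha}}$. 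As $\psi$ is increasing, the right side is at least $y(a^n)-\rho>(1-\ve)z_n-\rho$, which exceeds $(1-\ve(1+\Delta))z_n$ as soon as $\rho\le\ve\Delta z_n$, i.e. for $\delta$ small; this is the intermediate lower bound. At the endpoint I insert $(y(a^n)+\rho)^{1-\alpha}>(1-\ve)^{1-\alpha}c(1-\alpha)a^n$ and $c(1-\alpha)(a^{n+1}-a^n)=c(1-\alpha)a^n(a-1)$; since $(1-\ve)^{1-\alpha}+(a-1)-(1-\ve)^{1-\alpha}a=(a-1)(1-(1-\ve)^{1-\alpha})>0$, the resulting bracket strictly exceeds the value $(1-\ve)^{1-\alpha}a$ that corresponds to $(1-\ve)z_{n+1}$, leaving a gap equal to a fixed positive multiple of $z_n$; choosing $\delta$ so that $2\rho$ sits below this gap yields $y(a^{n+1})>(1-\ve)z_{n+1}$.

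For the upper bound I proceed only after the lower bound, which guarantees $y(t)>(1-\ve(1+\Delta))z_n>0$ and hence $w(t)-\rho>0$ on the whole interval. Then $y(s)^\alpha\le(w(s)-\rho)^\alpha$, so $w'\le c(w-\rho)^\alpha$, and comparison with $\phi=\rho+v$, $v'=cv^\alpha$, $v(a^n)=y(a^n)-\rho$, gives $y(t)\le 2\rho+((y(a^n)-\rho)^{1-\alpha}+c(1-\alpha)(t-a^n))^{\frac1{1-\alpha}}$. Since $v$ is increasing, the maximum over $[a^n,a^{n+1}]$ occurs at $a^{n+1}$, where the bracket is $<c(1-\alpha)a^n((1+\ve)^{1-\alpha}+a-1)$; now $(1+\ve)^{1-\alpha}+(a-1)<(1+\ve)^{1-\alpha}a$ by the analogous strict gap (here because $(1+\ve)^{1-\alpha}>1$), so the bracket lies below the value defining $(1+\ve)z_{n+1}$ by a fixed multiple of $z_n$, and absorbing $2\rho$ for $\delta$ small gives $y(t)<(1+\ve)z_{n+1}$ throughout. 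Taking $\delta=\delta(\ve,a,\Delta)$ to be the minimum of the finitely many smallness thresholds found above — all independent of $n$ by the $z_n$-scaling — finishes the argument.

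I expect the main obstacle to be the endpoint recovery of the lower bound to the \emph{full} factor $(1-\ve)$ at the larger scale $z_{n+1}$: this is precisely the self-correcting effect of the decreasing drift, made quantitative by $(1-\ve)^{1-\alpha}<1$, and it is what allows the extra $\Delta$-slack admitted at intermediate times to be recouped by time $a^{n+1}$. A second point requiring care is justifying the comparisons rigorously even though $u\mapsto u^\alpha$ is non-Lipschitz at $0$ and $g$ is only measurable; this is handled by working with the absolutely continuous part $w$ and by observing that, once the lower bound holds, all arguments of $u\mapsto u^\alpha$ remain in a compact subinterval of $(0,\infty)$, where it is Lipschitz. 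Among the standing hypotheses, $\Delta<\tfrac1\ve-1$ is used to keep the lower edge $1-\ve(1+\Delta)$ of the tube strictly positive, consistently with the positivity of $y$ assumed in this subsection, while the bound $\Delta<\frac{1+\alpha}{-\alpha}$, i.e. $1+\alpha(1+\Delta)>0$, keeps that lower edge in the region where the elementary estimates on $y^\alpha$ close up; both enter only through the final choice of $\delta$.
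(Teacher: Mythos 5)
Your proof is correct, but it follows a genuinely different route from the paper's. The paper takes as barriers the two \emph{constants} forming the edges of the tube \eqref{eq1.3}, bounds the drift integral crudely by $c\,a^n(a-1)\max_{[a^n,a^{n+1}]}y_-^\alpha$ (resp.\ the $\min$ of $y_+^\alpha$), and reduces everything to the single algebraic inequality \eqref{eq1.8}, which is then verified via the mean--value inequalities \eqref{eq3.1}; it is exactly there that the hypothesis $\Delta<\frac{1+\alpha}{-\alpha}$, i.e.\ $|(1+\Delta)\alpha|<1$, is consumed. You instead split $y=w+r$ with $w$ absolutely continuous and $|r|\le\rho=C\delta z_n$, and compare $w$ with the \emph{exact} solutions of the shifted autonomous equations $\psi'=c(\psi\pm\rho)^\alpha$, so that the endpoint recovery of the full factors $1\mp\ve$ at scale $z_{n+1}$ comes from the clean inequalities $(1-\ve)^{1-\alpha}<1<(1+\ve)^{1-\alpha}$ rather than from \eqref{eq3.2}. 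This buys two things: the argument is sharper (notably, your intermediate lower bound only needs $\rho\le\ve\Delta z_n$, so the constraint $\Delta<\frac{1+\alpha}{-\alpha}$ is never actually used — your proof establishes a slightly stronger statement), and it is more explicit about why the comparison is legitimate despite $u\mapsto u^\alpha$ being non-Lipschitz at $0$ (the paper's ``if $y_-<y<y_+$ then \dots'' step silently relies on the same open--closed continuity argument that you make visible by working with $w$ and with decreasing right-hand sides). Both proofs correctly exploit the same structural fact — the self-correcting effect of a decreasing drift — and both yield $\delta$ uniform in $n$ through the common $z_n$-scaling, so your version is a valid, and in places cleaner, substitute.
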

\begin{proof}
The function $x^\alpha, x>0$ is decreasing. So, if
$$
0<y_-(t)<y(t)<y_+(t), \ t\in [a^n, a^{n+1}],
$$
then  {for all $t\in [a^n, a^{n+1}]$}
$$
y(t)=y(a^n)+\int_{a^n}^tcy^\alpha(s)ds + g(t)-g(a^n)<
$$
$$
 y_+(a^n)+\int_{a^n}^tcy_-^\alpha(s)ds + |g(t)|+|g(a^n)|\leq
$$
$$
y_+(a^n)+c \max_{s\in[a^n,t]}y_-^\alpha(s)(t-a^n)+2\delta t^\frac1{1-\alpha} \leq
$$
\be\label{eq1.5}
y_+(a^n)+c a^n(a-1) \max_{s\in[a^n,a^{n+1}]}y_-^\alpha(s)+2\delta a^\frac{n+1}{1-\alpha}.
\ee
Similarly
\be\label{eq1.6}
y(t)>  y_-(a^n)+c a^n(a-1) \min_{s\in[a^n,a^{n+1}]}y_+^\alpha(s)-2\delta a^\frac{n+1}{1-\alpha}.
\ee
By $y_-$ and $y_+$ denote the left hand side and the right hand side of \eqref{eq1.3}, correspondingly. To prove the lemma it suffices to
show that there exists $\delta>0$ such that  the upper (and the lower) bound of $y$ from inequalities \eqref{eq1.5} (respectively \eqref{eq1.6})
 is less than the right hand side of \eqref{eq1.3}  (is greater than the left hand side of \eqref{eq1.3} and \eqref{eq1.4}).

Let us check the upper bound only. The lower bound can be proved similarly.

Let $t\in[a^{n},a^{n+1}].$ We have to verify  
$$
(1+\ve)\left(c(1-\alpha)a^n\right)^{\frac{1}{1-\alpha}}+c a^n (a-1)
\left(  (1-\ve(1+\Delta))(c(1-\alpha)a^n )^{\frac{1}{1-\alpha}}\right)^\alpha
+2\delta a^{\frac{n+1}{1-\alpha}}<
$$
$$
(1+\ve)\left(c(1-\alpha)a^{n+1}\right)^{\frac{1}{1-\alpha}}
$$
or the following equivalent inequality
\be\label{eq1.8}
 \frac{(1-\ve(1+\Delta))^\alpha(a-1)}{1-\alpha}+\frac{2\delta a^{\frac{1}{1-\alpha}}}{(c(1-\alpha) )^{\frac{1}{1-\alpha}}}<(1+\ve)(a^{\frac1{1-\alpha}}-1)
\ee
for some fixed $\delta =\delta(\ve, a,\Delta)>0$.

Since $\alpha\in(-1,0]$, the mean value theorem yields inequalities
\be\label{eq3.1}
\forall x\in(0,1)\ \ (1-x)^\alpha\leq1-\alpha x;\ \ \ \ \forall x>0\ \ x^{\frac1{1-\alpha}} -1 \geq \frac {x-1}{1-\alpha}.
\ee
It follows from \eqref{eq3.1} that to prove \eqref{eq1.8} it is sufficient to check inequality 
\be\label{eq3.2}
 \frac{(1-\ve(1+\Delta)\alpha)(a-1)}{1-\alpha}+\frac{2\delta a^{\frac{1}{1-\alpha}}}{(c(1-\alpha) )^{\frac{1}{1-\alpha}}}<\frac{(1+\ve)(a-1)}{1-\alpha}
\ee
for some $\delta>0.$

Since $|(1+\Delta)\alpha|<1$ by the assumptions of the lemma, inequality \eqref{eq3.2} is true for sufficiently small $\delta.$ Lemma \ref{lemma2.1} is proved.
\end{proof}
The following statement is a simple corollary of  Lemma \ref{lemma2.1}.
\begin{lem}\label{lem2.2}
Let $\ve\in(0,1),  a>1, \Delta\in (0,\frac{1+\alpha}{-\alpha})\cap (0,  \frac1\ve-1)  $ be arbitrary. Assume that
 
(i) $\delta=\delta(\ve, a, \Delta)>0$ is  selected 
from Lemma \ref{lemma2.1}, 

(ii)  inequality \eqref{eq1.2} is true for 
some $n_0$,

(iii)
    $\ \ \ 
|g(t)|\leq \delta t^{\frac1{1-\alpha}}, \ t\geq a^{n_0}.
$

Then
\be\label{eq3.3}
(1-\ve(1+\Delta))\leq\liminf_{t\to\infty}\frac{y(t)}{\left(c(1-\alpha)t\right)^{\frac{1}{1-\alpha}}}\leq
\ee
$$
\limsup_{t\to\infty}\frac{y(t)}{\left(c(1-\alpha)t\right)^{\frac{1}{1-\alpha}}}\leq (1+\ve)a^{\frac1{1-\alpha}}.
$$
\end{lem}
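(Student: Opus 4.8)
The plan is to iterate Lemma~\ref{lemma2.1} to pin $y$ at the grid points $a^n$, and then to upgrade the \emph{constant} interior lower bound of \eqref{eq1.3} to a \emph{growing} barrier — this is exactly what the stated constant $1-\ve(1+\Delta)$ (with no spurious factor $a^{-1/(1-\alpha)}$) requires. First I would run the grid induction. By (ii), \eqref{eq1.2} holds at $n=n_0$, and by (iii) the bound $|g(t)|\le\delta t^{1/(1-\alpha)}$ holds on every $[a^n,a^{n+1}]$ with $n\ge n_0$, so Lemma~\ref{lemma2.1} applies on each such interval. Evaluating the upper half of \eqref{eq1.3} at the right endpoint $t=a^{n+1}$ gives $y(a^{n+1})<(1+\ve)(c(1-\alpha)a^{n+1})^{1/(1-\alpha)}$, while \eqref{eq1.4} gives the matching lower bound; together these are precisely \eqref{eq1.2} at $n+1$. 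Hence \eqref{eq1.2} holds for all $n\ge n_0$, and \eqref{eq1.3} holds on every $[a^n,a^{n+1}]$.

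The $\limsup$ bound is now immediate: for $t\in[a^n,a^{n+1}]$ the upper part of \eqref{eq1.3} together with $t\ge a^n$ gives $y(t)/(c(1-\alpha)t)^{1/(1-\alpha)}<(1+\ve)(a^{n+1}/t)^{1/(1-\alpha)}\le(1+\ve)a^{1/(1-\alpha)}$. The $\liminf$ is the real point: the naive lower half of \eqref{eq1.3}, evaluated at $t\le a^{n+1}$, only yields $(1-\ve(1+\Delta))a^{-1/(1-\alpha)}$. To recover the stated constant I would instead establish, on each $[a^n,a^{n+1}]$, the growing barrier $y(t)\ge w(t):=(1-\ve(1+\Delta))\,z(t)$ with $z(t):=(c(1-\alpha)t)^{1/(1-\alpha)}$, by a \emph{two–sided} first–crossing argument for the strip $w(t)\le y(t)\le W(t):=(1+\ve^{*})z(t)$, where $\ve^{*}>\ve$ is fixed below. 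The decisive structural fact (here $\alpha<0$, so $x\mapsto x^{\alpha}$ is decreasing) is that inside the strip the nonlinearity telescopes against $z'=cz^{\alpha}$:
$$c\int_{a^n}^{t}W^{\alpha}\,ds=(1+\ve^{*})^{\alpha}\,(z(t)-z_n),\qquad c\int_{a^n}^{t}w^{\alpha}\,ds=(1-\ve(1+\Delta))^{\alpha}\,(z(t)-z_n),$$
with $z_n:=z(a^n)$. By the grid bound \eqref{eq1.2}, $w(a^n)<(1-\ve)z_n\le y(a^n)\le(1+\ve)z_n<W(a^n)$, so $y$ starts strictly inside the strip.

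At a first down–crossing of $w$, inserting $y\le W$ and the telescoping identity for $W$ gives $y(t)\ge(1-\ve)z_n+(1+\ve^{*})^{\alpha}(z(t)-z_n)-2\delta t^{1/(1-\alpha)}$; comparing with $w(t)$ and using that the initial surplus $(1-\ve)z_n-w(a^n)=\ve\Delta\,z_n$ dominates, this stays above $w$ once $(1+\ve^{*})^{\alpha}\ge1-\ve(1+\Delta)$ and $\delta$ is small. Symmetrically, inserting $y\ge w$ and the telescoping identity for $w$ excludes a first up–crossing of $W$ provided $\ve^{*}>\ve$ and $1+\ve^{*}\ge(1-\ve(1+\Delta))^{\alpha}$ (the surplus $\ve^{*}-\ve$ absorbing the perturbation for $\delta$ small). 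Writing $u=\ve(1+\Delta)\in(0,1)$, the two requirements on $\ve^{*}$ read $(1-u)^{\alpha}-1\le\ve^{*}\le(1-u)^{1/\alpha}-1$, and this window is nonempty \emph{precisely because} $\alpha^{2}<1$: for a base in $(0,1)$ one has $(1-u)^{\alpha}\le(1-u)^{1/\alpha}\Leftrightarrow\alpha\ge1/\alpha\Leftrightarrow|\alpha|<1$. Moreover $\ve^{*}>\ve$ is admissible in the window because $1+\Delta>|\alpha|$ forces $1+\ve<(1-u)^{1/\alpha}$. Fixing such an $\ve^{*}$ and $\delta$ small enough (depending only on $\ve,a,\Delta$, compatibly with the $\delta$ of Lemma~\ref{lemma2.1}, e.g. their minimum), $y$ cannot leave the strip, so $y(t)\ge(1-\ve(1+\Delta))z(t)$ on $[a^n,a^{n+1}]$ for every $n\ge n_0$, which is the asserted lower bound on the $\liminf$.

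The main obstacle is the coupling together with the low regularity of $g$. Since $g$ is only measurable (c\`adl\`ag in the applications), $y$ may \emph{jump} across a barrier, so the first–crossing argument must be run simultaneously for $w$ and $W$ and must absorb a downward or upward jump of size $\le 2\delta t^{1/(1-\alpha)}$ into the telescoping margin; this is exactly what the smallness of $\delta$ guarantees, the leading constraint being of order $\ve\Delta\,(c(1-\alpha))^{1/(1-\alpha)}$. The only genuinely non–routine structural point is the compatibility of the two barrier conditions — the nonemptiness of the $\ve^{*}$–window, equivalently $(1-u)^{\alpha^{2}}>1-u$ — which, unlike the crude iteration of \eqref{eq1.3}, is what removes the factor $a^{-1/(1-\alpha)}$ and delivers the constant $1-\ve(1+\Delta)$ exactly.
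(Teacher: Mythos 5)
Your grid induction and $\limsup$ bound are exactly the paper's proof: iterate Lemma \ref{lemma2.1} (using \eqref{eq1.4} together with the upper half of \eqref{eq1.3} at $t=a^{n+1}$ to recover \eqref{eq1.2} at level $n+1$), then compare $\sup_{[a^n,a^{n+1}]}y$ with $\left(c(1-\alpha)a^n\right)^{1/(1-\alpha)}$. Where you diverge is the $\liminf$: the paper simply states that it ``is proved similarly'', and, as you correctly observe, the symmetric comparison of $\inf_{[a^n,a^{n+1}]}y$ with $\left(c(1-\alpha)a^{n+1}\right)^{1/(1-\alpha)}$ only yields the constant $(1-\ve(1+\Delta))a^{-1/(1-\alpha)}$, not the stated $1-\ve(1+\Delta)$. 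Your two-sided barrier argument --- the strip $w=(1-\ve(1+\Delta))z\le y\le W=(1+\ve^{*})z$ with $z(t)=(c(1-\alpha)t)^{1/(1-\alpha)}$, the telescoping identities $c\int W^\alpha\,ds=(1+\ve^{*})^{\alpha}(z(t)-z_n)$ coming from $z'=cz^{\alpha}$, and the monotonicity of $x\mapsto x^{\alpha}$ for $\alpha\le 0$ --- is a genuinely different and sharper route; your window analysis for $\ve^{*}$ (nonempty since $\alpha\ge 1/\alpha$ for $\alpha\in(-1,0)$, with $\ve^{*}>\ve$ admissible because $1+\Delta>-\alpha$) is correct, and it proves \eqref{eq3.3} with the constants exactly as stated. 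The discrepancy is immaterial downstream --- in Theorem \ref{theorem3} one sends $a\downarrow 1$ and $\ve\downarrow 0$, so the weaker constant with the extra factor $a^{-1/(1-\alpha)}$ serves equally well --- so the paper's statement/proof mismatch is harmless, but your argument is the one that matches the statement literally.

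Two repairs of detail are needed. First, your barrier requires $\delta$ to satisfy in addition roughly $2\delta a^{1/(1-\alpha)}<\min(\ve\Delta,\ve^{*}-\ve)\,(c(1-\alpha))^{1/(1-\alpha)}$, which is not implied by the $\delta(\ve,a,\Delta)$ of Lemma \ref{lemma2.1} (for large $a$ the latter may be bigger); since Lemma \ref{lemma2.1} survives any decrease of its $\delta$, the fix is to redefine $\delta(\ve,a,\Delta)$ as the minimum of the two, which changes nothing in the lemma's role in Theorem \ref{theorem3}. Second, your treatment of the low regularity of $g$ is the right instinct but incomplete as written: absorbing a single jump into the margin does not rule out bad times accumulating immediately to the right of a good time. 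A clean fix is to let $T$ be the supremum of $t$ such that $w<y<W$ a.e.\ on $[a^n,t]$, note that your two estimates then bound $y$ pointwise at every $t\le T$ (null sets do not affect the integral, while $|g(t)|\le\delta t^{1/(1-\alpha)}$ holds pointwise), and then push past $T$: the lower-barrier estimate on $(T,T+\eta]$ uses only $y>0$ there, so it holds unconditionally with the continuity of $z$ supplying the margin, and the upper-barrier estimate then needs only the just-established lower barrier on $(T,T+\eta]$. With these two patches your proof is complete.
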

Indeed, it follows from Lemma \ref{lemma2.1} that for all $n\geq n_0$ we have \eqref{eq1.3}.
So,
$$
\limsup_{t\to\infty}\frac{y(t)}{\left(c(1-\alpha)t\right)^{\frac{1}{1-\alpha}}}\leq \limsup_{n\to\infty}\frac{\sup_{t\in[a^n, a^{n+1}]}y(t)}{\left(c(1-\alpha)a^n\right)^{\frac{1}{1-\alpha}}}
\leq 
$$
$$
\limsup_{n\to\infty}\frac{(1+\ve)(c(1-\alpha)a^{n+1})^{\frac1{1-\alpha}}}{\left(c(1-\alpha)a^n\right)^{\frac{1}{1-\alpha}}} = (1+\ve)a^{\frac1{1-\alpha}}.
$$
The inequality for $\lim\inf$ is proved similarly.
\begin{thm}\label{theorem3}
Assume that $\lim_{t\to\infty}\frac{g(t)}{t^{\frac{1}{1-\alpha}}}=0$ and  the solution of \eqref{eq198} is such that
$$
\liminf_{t\to\infty}y(t)>0.
$$
Then
 $$
\lim_{t\to\infty}\frac{y(t)}{ (c(1-\alpha) t)^{\frac1{1-\alpha}}}=1.
$$
\end{thm}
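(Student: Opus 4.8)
The plan is to work throughout with the normalized ratio
$$
R(t):=\frac{y(t)}{\left(c(1-\alpha)t\right)^{\frac{1}{1-\alpha}}},\qquad L:=\liminf_{t\to\infty}R(t),\quad U:=\limsup_{t\to\infty}R(t),
$$
and to show $L=U=1$. Write $p=\tfrac{1}{1-\alpha}$; the algebraic engine is the scaling identity $\alpha p+1=p$, which gives $\int_T^t s^{\alpha p}\,ds=\tfrac{t^{p}-T^{p}}{p}$ together with $c\,(c(1-\alpha))^{\alpha p}\cdot\tfrac1p=(c(1-\alpha))^{p}$. Hence feeding a bound $y(s)\asymp A\,(c(1-\alpha)s)^{p}$ into the drift integral reproduces the same power $t^{p}$ with the constant transformed by $A\mapsto A^{\alpha}$; since $g(t)=o(t^{p})$, the perturbation is negligible after normalization. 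I use that $y>0$ and that, because $\alpha<0$, the map $x\mapsto x^{\alpha}$ is decreasing, so lower bounds on $y$ become upper bounds on $y^{\alpha}$ and vice versa. (The case $\alpha=0$ is immediate, as then $y(t)=x+ct+g(t)$ and $p=1$.)

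First I would record two self-improving inequalities. If $L>0$, then for small $\eta>0$ one has $y(s)\ge(L-\eta)(c(1-\alpha)s)^{p}$ for large $s$, hence $y^{\alpha}(s)\le(L-\eta)^{\alpha}(c(1-\alpha)s)^{\alpha p}$; inserting this into the equation and using the scaling identity gives $y(t)\le(L-\eta)^{\alpha}(c(1-\alpha)t)^{p}+o(t^{p})$, i.e. $U\le(L-\eta)^{\alpha}$, and letting $\eta\to0$ yields $U\le L^{\alpha}$. In particular $U\le L^{\alpha}<\infty$ as soon as $L>0$. Symmetrically, once $U<\infty$, the eventual bound $y(s)\le(U+\eta)(c(1-\alpha)s)^{p}$ produces $y^{\alpha}(s)\ge(U+\eta)^{\alpha}(c(1-\alpha)s)^{\alpha p}$ and then $L\ge(U+\eta)^{\alpha}$, whence $L\ge U^{\alpha}$.

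The main obstacle, and the only genuinely delicate point, is the a priori lower bound $L>0$ (equivalently, excluding $y(t)=o(t^{p})$ along a subsequence): a naive bootstrap fails because $g$ may be negative of intermediate order $t^{\theta}$ with $\alpha+1<\theta<p$, which destroys any crude polynomial lower bound built from the linear bound $y=O(t)$. I would instead exploit the self-correcting nature of the dynamics through a dichotomy. Fix large $t$, set $s_0=2^{-(1-\alpha)}t$ so that $(s_0/t)^{p}=\tfrac12$, and take $t$ large enough that $|g(s)|\le\eta s^{p}$ on $[s_0,t]$ with $\eta$ small. If $R\le1$ on all of $[s_0,t]$, then $y^{\alpha}(s)\ge(c(1-\alpha)s)^{\alpha p}$ there, and integrating the drift from $s_0$ gives $R(t)\ge\bigl(1-(s_0/t)^{p}\bigr)-O(\eta)=\tfrac12-O(\eta)\ge\tfrac14$. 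Otherwise $R(s_1)>1$ for some $s_1\in[s_0,t]$; since the drift is nonnegative, $y(t)\ge y(s_1)-2\eta t^{p}$, so $R(t)\ge R(s_1)(s_1/t)^{p}-O(\eta)\ge\tfrac12-O(\eta)\ge\tfrac14$. Either way $R(t)\ge\tfrac14$ for all large $t$, so $L\ge\tfrac14>0$.

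Combining the three facts closes the argument. From $L>0$ I get $U\le L^{\alpha}<\infty$, and then $L\ge U^{\alpha}$. Applying the decreasing map $x\mapsto x^{\alpha}$ to $U\le L^{\alpha}$ gives $U^{\alpha}\ge L^{\alpha^{2}}$, so $L\ge U^{\alpha}\ge L^{\alpha^{2}}$, whence $L^{1-\alpha^{2}}\ge1$ and thus $L\ge1$; symmetrically $U\le U^{\alpha^{2}}$ forces $U\le1$. Since $L\le U$ this yields $L=U=1$, i.e. $y(t)\sim(c(1-\alpha)t)^{1/(1-\alpha)}$. I note that this direct route reproduces, in quantitative form, the content of Lemma \ref{lem2.2}; alternatively one could invoke that lemma directly, the only missing ingredient being a grid point $a^{n_0}$ at which $R$ lies within $\ve$ of $1$, which the bound $L\ge\tfrac14$ together with the inequalities $L\ge U^{\alpha}$, $U\le L^{\alpha}$ supplies.
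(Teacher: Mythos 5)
Your proof is correct, and it takes a genuinely different route from the paper's. The paper works on the multiplicative grid $\{a^n\}$: Lemma \ref{lemma2.1} propagates the two-sided bound $(1\pm\ve)(c(1-\alpha)a^n)^{\frac{1}{1-\alpha}}$ across one block $[a^n,a^{n+1}]$, Lemma \ref{lem2.2} iterates this to trap the normalized ratio between $1-\ve(1+\Delta)$ and $(1+\ve)a^{\frac{1}{1-\alpha}}$, and the proof of Theorem \ref{theorem3} then introduces an auxiliary solution $x$ started at exactly the reference value $(c(1-\alpha)a^n)^{\frac{1}{1-\alpha}}$ and transfers its asymptotics to $y$ via the non-expansion estimate $|x(t)-y(t)|\le |x(a^n)-y(a^n)|$ (valid because the drift is decreasing for $\alpha<0$), finally sending $\ve\to 0$ and $a\to 1$. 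You dispense with both the grid and the auxiliary solution: the scaling identity $\alpha p+1=p$ converts eventual two-sided bounds on $y$ directly into the pair $U\le L^{\alpha}$, $L\ge U^{\alpha}$ for the liminf and limsup of your ratio $R$, and this pair forces $L=U=1$ once $L>0$; your dichotomy on $[2^{-(1-\alpha)}t,\,t]$ (either $R\le 1$ throughout, in which case the drift alone pushes $R(t)$ up to $\tfrac12-O(\eta)$, or $R$ exceeds $1$ somewhere, in which case monotonicity of $y-g$ preserves that height up to time $t$) supplies the needed $L\ge\tfrac14$, and I verified that each case closes as claimed. Two observations on what each approach buys. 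First, your dichotomy uses only the standing positivity assumption of the subsection, so it shows that the hypothesis $\liminf_{t\to\infty}y(t)>0$ is in fact automatic in this setting, whereas the paper invokes that hypothesis to choose the block where the comparison with $x$ begins; your argument is also shorter and quantitatively explicit. Second, the paper's grid machinery is not wasted effort in the larger scheme: Lemma \ref{lemma2.1} also underlies Lemma \ref{remk33}, which is needed in \S\ref{sec4} and \S\ref{Examples}, while your fixed-point bootstrap, being tied to the monotone decreasing drift, replaces only the proof of Theorem \ref{theorem3} itself.
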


\begin{proof}
Let $\ve>0, \Delta\in (0,\frac{1+\alpha}{-\alpha})\cap (0,  \frac1\ve-1), a>1$ be arbitrary, and $\delta =\delta(\ve, a,\Delta)$ be from    Lemma \ref{lemma2.1}. Select  $n$ such that
$$
\inf_{t\geq a^n}y(t)>0, \ \ \sup_{t\geq a^n}\frac{|g(t)|}{t^{\frac1{1-\alpha}}}\leq \frac{\delta}2.
$$
Let
$$
x(t)=(c(1-\alpha)a^n)^{\frac1{1-\alpha}}+c\int_{a^n}^t x^{\alpha}(s)ds+g(t)-g(a^n).
$$
Note that $x(t)>0, t\geq a^n$ by Lemma \ref{lemma2.1}.

It 
follows from Lemma \ref{lem2.2}  that
$$
(1-\ve(1+\Delta))\leq\liminf_{t\to\infty}\frac{x(t)}{\left(c(1-\alpha)t\right)^{\frac{1}{1-\alpha}}}\leq
\limsup_{t\to\infty}\frac{x(t)}{\left(c(1-\alpha)t\right)^{\frac{1}{1-\alpha}}}\leq (1+\ve)a^{\frac1{1-\alpha}}.
$$ 
Assume that $y(a^n)\geq x(a^n).$ Then by the comparison theorem $ y(t)\geq x(t)>0, t\geq a^n.$
Since   $ \alpha<0,$  we have 
$$
x(t)-y(t)\leq x(a^n)-y(a^n)+\int_{a^n}^t(x^{\alpha}(s)-y^{\alpha}(s))ds\leq x(a^n)-y(a^n), t\geq a^n.
$$
Similarly, if $y(a^n)\leq x(a^n),$ then $
x(t)-y(t)\geq  x(a^n)-y(a^n), t\geq a^n.
$
Hence, in any case
$$
|x(t)-y(t)|\leq  |x(a^n)-y(a^n)|, t\geq a^n.
$$
Therefore
$$
(1-\ve(1+\Delta))\leq\liminf_{t\to\infty}\frac{y(t)}{\left(c(1-\alpha)t\right)^{\frac{1}{1-\alpha}}}\leq
\limsup_{t\to\infty}\frac{y(t)}{\left(c(1-\alpha)t\right)^{\frac{1}{1-\alpha}}}\leq (1+\ve)a^{\frac1{1-\alpha}}.
$$ 
Since $\ve>0, \Delta\in (0,\frac{1+\alpha}{-\alpha})\cap (0,  \frac1\ve-1),$ and $a>1$ were arbitrary, the theorem is proved.
\end{proof}
The following result follows from the course of Theorems' \ref{lem5} and \ref{theorem3} proof.
\begin{lem}\label{remk33}
{ Let  $h(t)=o(t^{\frac1{1-\alpha}}), t\to\infty$ be a non-negative function, and $y$ be a solution of \eqref{eq198}, where $\alpha\in(-1,1)$.
There is $R=R(\alpha, g, h)>0$ such that if 
$y(t_0)\geq R$ and $|g(t-t_0)|\leq h(t), t\geq 0, $ then
$\lim_{t\to\infty}{y(t)}=+\infty.$
}
\end{lem}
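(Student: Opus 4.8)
The plan is to reduce, by a time shift, to equation \eqref{eq198} started from a large value, and then to recycle the sub/supersolution machinery already built in Theorems \ref{lem5} and \ref{theorem3}. Set $z(s):=y(t_0+s)$; then $z$ solves $z(s)=z(0)+c\int_0^s|z(u)|^\alpha\,du+\bar g(s)$ with $\bar g(s):=g(t_0+s)-g(t_0)$, $z(0)=y(t_0)\ge R$, and the hypothesis reads $|\bar g(s)|\le h(s)$ with $h(s)=o(s^{1/(1-\alpha)})$. It then suffices to produce a threshold $R$, depending only on $\alpha$, $c$ and $h$, such that $z(0)\ge R$ forces $z(s)\to+\infty$. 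Following \S\ref{section3}, I treat the two ranges of $\alpha$ separately.

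For $\alpha\in(0,1)$ I would fix any $c_-\in(0,c)$ and reuse $\wt z^{(c_-)}(s)=(1+c_-(1-\alpha)s)^{1/(1-\alpha)}\ge 1$. Inequality \eqref{eq276} says exactly that $\wt z^{(c_-)}$ is a subsolution, $\wt z^{(c_-)}(s)\le K(c_-)+c\int_0^s(\wt z^{(c_-)})^\alpha\,du-h(s)$, while dropping the nonnegative integral gives $z(s)\ge z(0)+c\int_0^s|z(u)|^\alpha\,du-h(s)$, i.e.\ $z$ is a supersolution with datum $z(0)\ge R$ and the same perturbation $-h$. Taking $R:=K(c_-)$ and invoking the comparison Lemma \ref{lem3} (legitimate here since $x\mapsto cx^\alpha$ is nondecreasing) yields $z(s)\ge\wt z^{(c_-)}(s)\to+\infty$. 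This is precisely the lower half of the proof of Theorem \ref{lem5}, now run from a fixed deterministic level rather than along the sequence $t_n$.

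For $\alpha\in(-1,0]$ the monotone comparison is unavailable, so I would instead force $z$ into the multiplicative band of Lemma \ref{lemma2.1} and iterate. Fix $\ve,a,\Delta$ with $\ve(1+\Delta)<1$, let $\delta=\delta(\ve,a,\Delta)$ be as in Lemma \ref{lemma2.1}, and pick $S$ with $h(s)\le\delta s^{1/(1-\alpha)}$ for $s\ge S$. Let $\phi(s)=(z(0)^{1-\alpha}+c(1-\alpha)s)^{1/(1-\alpha)}$ be the \emph{noiseless} barrier from $z(0)$, and let $s_{**}=s_{**}(z(0))$ be the first time $\phi(s)\le(1+\ve/2)(c(1-\alpha)s)^{1/(1-\alpha)}$; a direct computation gives $s_{**}\asymp z(0)^{1-\alpha}$, so the natural scale satisfies $(c(1-\alpha)s_{**})^{1/(1-\alpha)}\asymp z(0)$ while $\sup_{u\le s_{**}}h(u)=o(z(0))$. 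Since $c|z|^\alpha\ge 0$ one has $z(s)\ge z(0)-h(s)$, which for $R$ large keeps $z\ge z(0)/2>0$ on $[0,s_{**}]$, uniformly in $z(0)\ge R$. On this interval both $z,\phi>0$ and I use the contraction estimate $|z(s)-\phi(s)|\le\sup_{u\le s}|\bar g(u)|\le\sup_{u\le s}h(u)$: writing $z(s)-\phi(s)=\bar g(s)+c\int_0^s(z^\alpha-\phi^\alpha)\,du$ and noting that, as in the proof of Theorem \ref{theorem3}, the decreasing function $x^\alpha$ makes the integral oppose the sign of $z-\phi$, so the gap stays trapped inside the oscillation of $\bar g$. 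Consequently $z(s_{**})/(c(1-\alpha)s_{**})^{1/(1-\alpha)}\to 1$ as $z(0)\to\infty$ (the barrier contributes a ratio in $[1,1+\ve/2]$ and the contraction error is $o(z(0))$), so for $R$ large $z(s_{**})$ lies in the band \eqref{eq1.2} and $s_{**}\ge S$. Lemma \ref{lem2.2}, applied with the geometric scales based at $s_{**}$, then gives $\liminf_{s\to\infty}z(s)/(c(1-\alpha)s)^{1/(1-\alpha)}\ge 1-\ve(1+\Delta)>0$, and in particular $z(s)\to+\infty$.

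The main obstacle is this $\alpha\in(-1,0]$ case, and within it the contraction estimate $|z-\phi|\le\sup|\bar g|$ between the perturbed solution and the unperturbed barrier. In Theorem \ref{theorem3} the two compared solutions share the \emph{same} driving $g$, so their gap has no perturbation term and contracts cleanly; here the gap inherits $\bar g$, and I must verify that the decreasing nature of $x^\alpha$ confines the gap to the band spanned by $\bar g$ even across its jumps, since $g$ is only assumed measurable and locally bounded and so $z$ may be discontinuous. The remaining ingredients—positivity on $[0,s_{**}]$, the order of magnitude $s_{**}\asymp z(0)^{1-\alpha}$, and the reindexing of the geometric scales in Lemma \ref{lem2.2}—are routine once this estimate is secured.
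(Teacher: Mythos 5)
Your proposal is correct and is essentially the paper's own argument made explicit: the paper's entire ``proof'' of Lemma \ref{remk33} is the one-line remark that it follows from the course of the proofs of Theorems \ref{lem5} and \ref{theorem3}, and your two cases are exactly the lower-bound half of the proof of Theorem \ref{lem5} (for $\alpha\in(0,1)$, via the subsolution \eqref{eq276} and the comparison Lemma \ref{lem3}, with $R=K(c_-)$) and the band/contraction mechanism of Lemmas \ref{lemma2.1}--\ref{lem2.2} and Theorem \ref{theorem3} (for $\alpha\in(-1,0]$). One harmless correction to the step you flag as the crux: what the monotonicity of $x^\alpha$ actually traps in $[-\sup|\bar g|,\sup|\bar g|]$ is the integral term $c\int_0^s\bigl(z^\alpha-\phi^\alpha\bigr)\,du$ (it cannot exceed $M=\sup_{u\le s}|\bar g(u)|$ because wherever it does, $z>\phi$ and it is nonincreasing), so the gap estimate is $|z(s)-\phi(s)|\le 2\sup_{u\le s}|\bar g(u)|$ rather than $\sup_{u\le s}|\bar g(u)|$ --- still $o(z(0))$ on $[0,s_{**}]$, so nothing downstream changes.
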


\section{Proof of the main results}\label{sec4}

\begin{thm}\label{thm2}
Let $\wt X(t), t\geq 0,$ be a  solution to   SDE 
$$
\wt X(t)=\int_0^t (c_+\1_{\wt X(s)\geq 0} -c_-\1_{\wt X(s)< 0})
 |\wt X(s)|^\alpha ds +  B(t), t\geq 0,
$$
where  $c_\pm>0,$ $B$ is a c\'{a}dl\'{a}g stochastic  process.

Suppose that  

1)  $\alpha\in(0,1)$ and there exists    a function $h(t)=o(t^ {\frac{1}{1-\alpha}}), t\to\infty,$  and a (random) sequence $\{t_n\}$, $\lim_{n\to\infty} t_n=\infty,$
such that
$$
|B(t)-B(t_n)|\leq h(t-t_n), \ t\geq t_n \ \mbox{a.s.}
$$

or

2) $\alpha\in(-1,0]$ and $B(t)=o(t^{\frac1{1-\alpha}}), t\to\infty$ a.s.

Then

$$
\wt X(t)\sim (c_+(1-\alpha)t)^{\frac{1}{1-\alpha}} \ \ \mbox{for a.a.}\  \omega\in\{\lim_{t\to\infty}\wt X(t)=+\infty\},
 $$
$$
\wt X(t)\sim -(c_-(1-\alpha)t)^{\frac{1}{1-\alpha}} \ \ \mbox{for a.a.}\  \omega\in\{\lim_{t\to\infty}\wt X(t)=-\infty\}.
$$
 \end{thm}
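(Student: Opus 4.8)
The plan is to argue pathwise and, on each of the two events, to reduce the equation to exactly the deterministic situation of \S\ref{section3}, applying Theorem \ref{lem5} when $\alpha\in(0,1)$ and Theorem \ref{theorem3} when $\alpha\in(-1,0]$. Fix $\omega$ in the full-measure set on which the relevant hypothesis on $B$ (condition 1 or condition 2) holds, and suppose moreover $\lim_{t\to\infty}\wt X(t)=+\infty$. Since $\wt X(t)\to+\infty$ means $\wt X(s)\ge1$ for all large $s$, there is a (random) time $T_0=T_0(\omega)<\infty$ with $\inf_{s\ge T_0}\wt X(s)>0$; in particular $\wt X(s)>0$ on $[T_0,\infty)$, so the indicator $\1_{\wt X(s)\ge0}$ equals $1$ there, and the strict lower bound is precisely what makes the singularity of $x\mapsto x^\alpha$ at $0$ harmless in the case $\alpha\le0$. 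Thus on $[T_0,\infty)$
$$
\wt X(t)=\wt X(T_0)+c_+\int_{T_0}^t \wt X(s)^\alpha\,ds+\bigl(B(t)-B(T_0)\bigr),\quad t\geq T_0.
$$

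First I would set $z(t):=\wt X(t+T_0)$, $x:=\wt X(T_0)>0$, and $g(t):=B(t+T_0)-B(T_0)$, so that $z$ solves \eqref{eq198} with $c=c_+$, is positive for all $t\ge0$, and satisfies $z(t)\to+\infty$. It then remains to transfer the hypotheses from $B$ to $g$. For $\alpha\in(-1,0]$ this is immediate: condition 2 gives $B(t)=o(t^{1/(1-\alpha)})$, hence $g(t)=o(t^{1/(1-\alpha)})$, while $\liminf_t z(t)=+\infty>0$, so Theorem \ref{theorem3} yields $z(t)\sim(c_+(1-\alpha)t)^{1/(1-\alpha)}$. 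For $\alpha\in(0,1)$, condition 1 provides $h(t)=o(t^{1/(1-\alpha)})$ and $t_n\to\infty$ with $|B(t)-B(t_n)|\le h(t-t_n)$ for $t\ge t_n$; discarding the finitely many $t_n\le T_0$ and putting $s_n:=t_n-T_0$, one obtains $|g(t)-g(s_n)|=|B(t+T_0)-B(t_n)|\le h(t-s_n)$ for $t\ge s_n$, so the hypotheses of Theorem \ref{lem5} hold (with the same $h$) and again $z(t)\sim(c_+(1-\alpha)t)^{1/(1-\alpha)}$. Undoing the shift via $\wt X(t)=z(t-T_0)$ and $((t-T_0)/t)^{1/(1-\alpha)}\to1$ gives $\wt X(t)\sim(c_+(1-\alpha)t)^{1/(1-\alpha)}$, the assertion on $\{\lim\wt X=+\infty\}$.

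The event $\{\lim_{t\to\infty}\wt X(t)=-\infty\}$ is treated by the same argument applied to $U:=-\wt X$. On this event $\wt X(s)<0$ eventually, so for a suitable $T_0$ the process $U$ is positive, tends to $+\infty$, and solves $U(t)=U(T_0)+c_-\int_{T_0}^t U(s)^\alpha\,ds-(B(t)-B(T_0))$ on $[T_0,\infty)$; that is, $U$ solves \eqref{eq198} with $c=c_-$ and driving function $-g$. Since $|-g|=|g|$ and $|-g(t)-(-g(t_n))|=|g(t)-g(t_n)|$, conditions 1 and 2 are invariant under this sign change, so the deterministic theorems apply verbatim and yield $U(t)\sim(c_-(1-\alpha)t)^{1/(1-\alpha)}$, i.e. $\wt X(t)\sim-(c_-(1-\alpha)t)^{1/(1-\alpha)}$.

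I expect the only genuine subtlety to be administrative rather than conceptual, since Theorems \ref{lem5} and \ref{theorem3} already carry the analytic weight. The work lies in (i) justifying the confinement time $T_0$ on each event together with the strict positivity $\inf_{s\ge T_0}\wt X(s)>0$ needed when $\alpha\le0$, and (ii) checking that the increment hypothesis of condition 1 survives the time shift $g(\cdot)=B(\cdot+T_0)-B(T_0)$, which is exactly the reindexing $t_n\mapsto t_n-T_0$ used above. No use of the It\^o formula or of the particular law of $B$ is required; the entire argument is pathwise on a full-measure set.
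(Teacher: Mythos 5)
Your proposal is correct and follows the same route as the paper: the paper's proof of this theorem consists precisely of the one-line observation that the result follows pathwise from Theorems \ref{lem5} and \ref{theorem3} (and their sign-reflected versions), and your write-up supplies exactly the omitted bookkeeping — the confinement time $T_0$ after which the drift is identically $c_+|\cdot|^\alpha$ (resp.\ $-c_-|\cdot|^\alpha$), the time shift $g(t)=B(t+T_0)-B(T_0)$, the reindexing $t_n\mapsto t_n-T_0$, and the reflection $U=-\wt X$ for the event $\{\lim_{t\to\infty}\wt X(t)=-\infty\}$. No gaps.
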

The result follows from Theorems \ref{lem5} and \ref{theorem3}  
and their natural modifications to the case   $c<0$.
Note that $\wt X$ has c\'{a}dl\'{a}g trajectories, so sets $ \{\lim_{t\to\infty}\wt X(t)=+\infty\}$ and $\{\lim_{t\to\infty}\wt X(t)=-\infty\}$ are measurable. 
To solve the selection problem for a process $X_\ve$ defined in \eqref{eq_main_0} we need the following result.
\begin{lem}\label{lem55}
Assume that a locally bounded  function $f:[0,\infty)\to\mbR$ is such that $f(t)\sim Kt^A, t\to\infty,$ where $A>0.$

Set $g_n(t)=n^{-1}f(n^{1/A}t).$ Then for any $T>0$ 
we have
$$
\lim_{n\to\infty} \sup_{s\in[0,T]}|g_n(s)-Ks^A|=0.
$$
\end{lem}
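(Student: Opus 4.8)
The plan is to exploit the scaling structure directly. Writing $u=n^{1/A}s$ and using the hypothesis in the form $f(u)=Ku^A(1+r(u))$ with $r(u)\to0$ as $u\to\infty$, one formally gets $g_n(s)=n^{-1}f(n^{1/A}s)=n^{-1}K n s^A(1+r(n^{1/A}s))=Ks^A(1+r(n^{1/A}s))$, so the target function arises as the exact scaling limit. The single difficulty is that the asymptotic $f(t)\sim Kt^A$ is available only for large arguments, whereas for $s$ close to $0$ the scaled argument $n^{1/A}s$ need not be large. I would therefore split $[0,T]$ into a bulk region $[\delta,T]$ and a boundary layer $[0,\delta]$, estimating each separately. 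Throughout I keep $|K|$ so as to allow either sign of $K$ (only $K\neq0$ matters, as required for $f(t)\sim Kt^A$ to be meaningful).

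On the bulk region the computation above gives $g_n(s)-Ks^A=Ks^A\,r(n^{1/A}s)$, hence $\sup_{s\in[\delta,T]}|g_n(s)-Ks^A|\le|K|T^A\sup_{u\ge n^{1/A}\delta}|r(u)|$. Since $s\ge\delta$ forces the scaled argument to be uniformly large, we have $n^{1/A}\delta\to\infty$, and because $r(u)\to0$ the right-hand side tends to $0$; this disposes of the bulk for every fixed $\delta$.

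The boundary layer is the main obstacle, since there the asymptotic cannot be invoked pointwise. The key step is to convert local boundedness together with the asymptotic into a single global polynomial majorant: fixing $T_0$ so that $|f(u)|\le2|K|u^A$ for $u\ge T_0$ (possible since $|r(u)|\le1$ eventually) and putting $M=\sup_{[0,T_0]}|f|$, which is finite by local boundedness, one obtains $|f(u)|\le M+2|K|u^A$ for all $u\ge0$. Scaling then yields $|g_n(s)|=n^{-1}|f(n^{1/A}s)|\le M/n+2|K|s^A$, so that on $[0,\delta]$ one has $|g_n(s)-Ks^A|\le M/n+3|K|\delta^A$. To finish, given $\varepsilon>0$ I would first choose $\delta$ so small that $3|K|\delta^A<\varepsilon/2$, and then $n$ large enough that simultaneously $M/n<\varepsilon/2$ and $|K|T^A\sup_{u\ge n^{1/A}\delta}|r(u)|<\varepsilon$. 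For such $n$ the supremum over the whole interval $[0,T]$ lies below $\varepsilon$, which proves the lemma.
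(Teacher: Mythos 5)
Your proof is correct and follows essentially the same route as the paper's: both split $[0,T]$ into a boundary layer $[0,\delta]$ handled via a global polynomial majorant obtained from local boundedness plus the asymptotic, and a bulk region $[\delta,T]$ where the scaled argument $n^{1/A}s\ge n^{1/A}\delta\to\infty$ lets the asymptotic apply uniformly. Your version is in fact slightly more careful than the paper's write-up, which on $[0,\delta]$ bounds only $|g_n(s)|$ and implicitly absorbs the additional $Ks^A$ term.
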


\begin{proof}
Let $\ve>0, \delta>0$ be arbitrary. 
Select $R>0$ such that $|f(s)|\leq R(1+s^A),\ s\geq 0.$
Then
$$
\sup_{s\in[0,\delta]}|g_n(s)|= \sup_{s\in[0,\delta]}|\frac{ f(n^{1/A}s)}{n}|\leq  \sup_{s\in[0,\delta]}
\frac{R(1+(n^{1/A}s)^A)}n =
\frac{R}{n}+R\delta^A.
$$
Select $t_0$ such that
$$
|\frac{f(t)}{Kt^A}-1|<\ve,\ t\geq t_0.
$$
Then for sufficiently large $n:$

$$
\sup_{s\in[\delta,T]}|g_n(s)-Ks^A|=\sup_{s\in[\delta,T]}\left({Ks^A} \left|\frac{g_n(s)}{Ks^A}-1\right|\right)\leq 
KT^A \sup_{s\in[\delta,T]}|\frac{g_n(s)}{Ks^A}-1|=
$$
$$
KT^A \sup_{s\in [\delta,T]}|\frac{ f(n^{1/A}s)}{ K(n^{1/A}s)^A}-1|\leq KT^A \sup_{n^{1/A}s\geq n^{1/A}\delta }|\frac{ f(n^{1/A}s)}{ K(n^{1/A}s)^A}-1| 
=
$$
$$
KT^A \sup_{z\geq n^{1/A}\delta }|\frac{ f(z)}{ K z^A}-1| <KT^{1/A}\ve.
$$

Thus
$$
\limsup_{n\to\infty} \sup_{s\in[0,T]}|g_n(s)-Ks^A|\leq R\delta^A + KT^{1/A}\ve.
$$

Since $\ve>0$  and $\delta>0$ are arbitrary, the lemma is proved.
\end{proof}
\begin{proof}[Proof of Theorem \ref{thm_main}]
Let $X$ be a solution to \eqref{eq_main_00}. 
Theorem  \ref{thm2} yields that for a.a.  $\omega\in\{ \lim_{t\to\infty}X(t)=+\infty\}$  we have 
$X(t)\sim {(c_+(1-\alpha)t)^{\frac{1}{1-\alpha}}}, t\to\infty$.

Set 
$f=X, n=\ve^{\frac{1}{ (1-\alpha)\beta-1}},  A=\frac{1}{1-\alpha}, K=(c_+(1-\alpha))^{\frac{1}{1-\alpha}}$ in Lemma \ref{lem55}.
Assumption $\alpha+\beta^{-1}>1$ yields $(1-\alpha)\beta-1 <0.$

 Denote $\wt X_\ve(t):=\ve^{\frac{-1}{ (1-\alpha)\beta-1} }  X (\ve^{\frac{1-\alpha}{(1-\alpha)\beta-1}}t), t\geq 0.$
So, it follows from Lemma \ref{lem55} that  for any $T>0$ and a.a. $\omega\in \{ \lim_{t\to\infty}  X(t) =+\infty\}$ we have the uniform convergence
$$
\lim_{\ve\to0+} \sup_{s\in[0,T]}|\wt X_\ve(t)-(c_+(1-\alpha)t)^{\frac{1}{1-\alpha}}|=0.
$$



Similarly, we have
$$
\lim_{\ve\to0+} \sup_{s\in[0,T]}|\wt X_\ve(t)+(c_-(1-\alpha)t)^{\frac{1}{1-\alpha}}|=0
$$
 for a.a.  $\omega$
 such that $\lim_{t\to\infty}X(t)=-\infty.$

Therefore, for a.a. $\omega\in \left\{\lim_{t\to\infty}  X(t) =+\infty \ \mbox{ or }\ \lim_{t\to\infty}  X(t) =-\infty\right\}$
we have the uniform convergence
$$
\lim_{\ve\to0+} \sup_{s\in[0,T]}\left|\wt X_\ve(t)-((1-\alpha)t)^{\frac{1}{1-\alpha}}\left(c_+^{\frac{1}{1-\alpha}}\1_{\lim_{t\to\infty}  X(t)
 =+\infty }- \right.\right.
$$
 \be\label{eq846}
\ \ \ \ \ \ \ \ \ \ \ \ \ \ \ \ \ \ \ \ \ \left.\left. c_-^{\frac{1}{1-\alpha}}\1_{\lim_{t\to\infty}  X(t) =-\infty }\right) \right|=0.
\ee
It follows from Lemma \ref{remk33}  that under assumptions   of the Theorem 
$$
\Pb\left(
\{\lim_{t\to\infty}|X(t)|=\infty\}\bigtriangleup 
\left(\{\lim_{t\to\infty}X(t)=+\infty\}\cup \{\lim_{t\to\infty}X(t)=-\infty\}\right)
\right)=0.
$$

It follows from uniqueness of the solution and Corollary \ref{corl1} that 
the distribution of $X_\ve(t), t\geq 0,$ equals the distribution of
 $\wt X_\ve(t), t\geq 0.$

This and the convergence \eqref{eq846} yields the proof.
 
\end{proof}

\section{Examples}\label{Examples}
L\'{e}vy stable processes and a fractional Brownian motion satisfy assumptions 1) and 3) of Theorem \ref{thm_main}, see Remarks \ref{remUniq} and \ref{remkCond3}. In this section we show that these processes also satisfy assumption 2) of the Theorem.

\begin{example}[Symmetric L\'{e}vy stable noise]\label{ex1}
Let $B_\beta(t), t\geq 0$ be a symmetric $1/\beta$-stable process. Then  $B_\beta$ is self-similar process with parameter $\beta.$

It was mentioned in Remark \ref{remUniq}  that there exists a unique  solution to \eqref{eq_dod}
if  $\alpha+1/\beta>1.$  
This solution is a strong Markov process \cite{TanakaTsuchiya74, Portenko1994}.
Condition 3) of Theorem \ref{thm_main}
is also satisfied, see Remark \ref{remkCond3}.

 Let us verify that condition 2) of Theorem \ref{thm_main}
is satisfied.
\begin{remk}
We give proofs only for $\alpha\in(0,1).$ The case $\alpha\in(-1,0]$ is  simpler.
\end{remk}
Let $\ve>0$ be such that $\alpha+(\beta+\ve)^{-1}>1.$ Set $h(t)=1+t^{\beta+\ve}.$ 

It follows from \cite{Khintchine} that $\lim_{t\to\infty}B_{\beta}(t)/h(t)=0$ a.s. 

It is easy to see that 
$$
\gamma:=\Pb(|B_\beta(t)|>h(t), \ t\geq 0)>0.
$$
 
To prove
 \be\label{eq_X_infty1}
\lim_{t\to\infty}|\wt X(t)|=\infty\ \ \mbox{ a.s.},
 \ee
 it suffices to show that
\be\label{eq_X_infty}
\forall K>0\ \Pb\left(\exists t_0\ \forall t\geq t_0\ \ \ |\wt X(t)|\geq K\right)=1.
\ee
Let $K$ be fixed. It follows from  Lemma \ref{remk33}  that 
there exists $M=M(K)$ such that

{\it  if $\omega$ and $t_0$ are such that  $|\wt X(t_0)|\geq M$ and $|B_\beta(s+t_0)-B_\beta(t_0)|<h(s), s\geq 0,$ 
then $|\wt X(t)|\geq K, t\geq t_0.$}
\newline
Set
\newline
 $\tau_1:=\inf\{t\geq 0 : |\wt X(t)|\leq K\},$ $\sigma_n:=\inf\{t\in \mbN\cap[\tau_n,\infty) : |\wt X(t)|\geq M\},$ $ \tau_{n+1}:=\inf\{t\geq \sigma_n : |\wt X(t)|\leq K\}, n\geq 1.$ 

Then  {by the strong Markov property,}
$$
X_n:=\begin{cases}
         \wt X(\tau_n), \ \ \tau_n<\infty,\\
         \infty, \ \ \tau_n=\infty.
\end{cases}
$$
 is a homogeneous Markov chain. 
 
 It is obvious that $\Pb(\sigma_n<\infty | \tau_n<\infty)=1.$ Hence
 $$
 \sup_{|x|\leq K}\Pb_x(X_{ 1}=\infty | X_0=x)\geq \Pb(|B_\beta(s+\sigma_1)-B_\beta(\sigma_1)|>h(s), s\geq 0)=\gamma>0.
 $$
 Therefore
$
\Pb(X_n<\infty)\leq (1-\gamma)^n.
$

So 
$$
\Pb(\exists n_0 \ : \ X_{n_0}=\infty)=1.
$$
This yields \eqref{eq_X_infty} and hence \eqref{eq_X_infty1}. 

Thus, we have proved that all  conditions of Theorems \ref{thm2} and \ref{thm_main} are satisfied for $1/\beta$-stable  L\'{e}vy processes if $\alpha+1/\beta>1.$ So
for a.a. $\omega$ we have either
$\wt X(t)\sim(c_+(1-\alpha)t)^{\frac{1}{1-\alpha}}$ or
 $\wt X(t)\sim -(c_-(1-\alpha)t)^{\frac{1}{1-\alpha}}$ as $t\to\infty.$ Moreover
  $X_\ve$ converges in distribution as $\ve\to0$ to 
 $$
((1-\alpha)t)^{\frac{1}{1-\alpha}}\left({c_+^{\frac{1}{1-\alpha}}}\1_{\{\lim_{t\to\infty}\wt X(t)=+\infty\}}- {c_-^{\frac{1}{1-\alpha}}}\1_{\{\lim_{t\to\infty}\wt X(t)=-\infty\}}\right).  
$$
\end{example}

\begin{example}[Fractional Brownian motion]\label{ex2}
 Let $\{B_{H}(t)\}$ be a fractional Brownian motion with Hurst parameter $H\in (0,2)$. Then $B_H$
 is the self-similar process with index $H.$ 
 Assume that $\alpha>0$ and $\alpha+H^{-1}>1.$ Then, see Remarks \ref{remUniq} and \ref{remkCond3} ,
 assumptions 1) and 3) of Theorem \ref{thm_main} are satisfied.
 
  Let us verify that condition 2) of Theorem \ref{thm_main}.
  
  The solution of the equation is not a Markov process, so we cannot use method of the previous example.
  
  Denote by $X_x(t), t\geq 0,$ the solution of the SDE
  $$
  X_x(t)=x+\int_0^t (c_+\1_{X_x(s)\geq 0} -c_-\1_{X_x(s)< 0})
 |X_x(s)|^\alpha ds +  B_H(t),\ t\geq 0.
  $$
  Let $x<y$. Since the function $x\to(c_+\1_{x\geq 0} -c_-\1_{x(s)< 0}) |x|^\alpha$ is increasing,
  the derivative $\frac{d(X_y(t)-X_x(t))}{dt}$ is positive and the function $t\to X_y(t)-X_x(t)$  is increasing too. 
  
  The process $X_x(t), t\geq 0,$ is defined up to a set of null measure. Set
  $\tilde X_x(t):=\limsup{y\downarrow x} X_y(t)$. It is easy to see that 
  $\tilde X_x(t)$ is a modification of $X_x(t)$ which is measurable in $(t,x,\omega)$.
  Further we consider only this modification and omit tilde in the notations.
\begin{lem}\label{lem_x^*}
For a.e. $\omega$ there exists a unique $x^*=x^*(\omega)$ such that
$$
\forall x>x^*\ \ \ \lim_{t\to\infty}X_x(t,\omega)=+\infty,
$$
$$
\forall x<x^*\ \ \ \lim_{t\to\infty}X_x(t,\omega)=-\infty.
$$
\end{lem}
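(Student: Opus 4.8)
The plan is to organize the real starting points by the monotone structure already recorded above: for $x<y$ the difference $X_y(t)-X_x(t)$ is nondecreasing in $t$ and bounded below by $y-x>0$, so $x\mapsto X_x(t,\omega)$ is nondecreasing. Accordingly I would set
$$
x^*(\omega):=\inf\{x\in\mbR:\ \lim_{t\to\infty}X_x(t,\omega)=+\infty\},
$$
and reduce the lemma to two facts: that $x^*$ is finite, and that the set of ``non-escaping'' initial points $\{x:\ X_x\not\to+\infty\text{ and }X_x\not\to-\infty\}$ contains at most one point, which is then forced to be $x^*$. Monotonicity supplies the rest. The set $\{X_\cdot\to+\infty\}$ is an up-set, so $(x^*,\infty)$ lies in it, giving the first half of the statement. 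For $x<x^*$ I would pick a rational $q\in(x,x^*)$; since $q<x^*=\inf\{X_\cdot\to+\infty\}$ we have $X_q\not\to+\infty$, so $q$ is not the unique non-escaping point and must run to $-\infty$, whence $X_x\le X_q\to-\infty$.

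For finiteness I would use the deterministic escape criterion of Lemma \ref{remk33} together with condition~3 of the main theorem (valid for the fBm by Remark \ref{remkCond3}): on a full-measure event there are a random $h(t)=o(t^{1/(1-\alpha)})$ and a sequence $t_n\uparrow\infty$ with $|B_H(t)-B_H(t_n)|\le h(t-t_n)$ for $t\ge t_n$. Lemma \ref{remk33} then provides a random $R$ such that $X_x(t_n)\ge R$ for some $n$ forces $X_x\to+\infty$, and (by the $c<0$ modification) $X_x(t_n)\le-R$ for some $n$ forces $X_x\to-\infty$. Since $X_x(t_n)\ge X_0(t_n)+x$ by the spreading bound, for any fixed large $n$ every $x\ge R-X_0(t_n)$ escapes to $+\infty$, and symmetrically every sufficiently negative $x$ escapes to $-\infty$; hence $x^*\in\mbR$.

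The core of the proof is uniqueness of the non-escaping point. Read contrapositively at each $t_n$, the escape criterion gives: if $X_x\not\to+\infty$ then $X_x(t_n)<R$ for all large $n$, and if $X_x\not\to-\infty$ then $X_x(t_n)>-R$ for all large $n$. Suppose $x_1<x_2$ were two non-escaping points. Then $X_{x_1}(t_n),X_{x_2}(t_n)\in(-R,R)$ for all large $n$, so the nondecreasing difference $D(t)=X_{x_2}(t)-X_{x_1}(t)$ satisfies $D(t_n)<2R$ and hence $D\le 2R$ throughout. On the other hand, writing $a$ for the drift (which is continuous and, for $\alpha\in(0,1)$, strictly increasing), $D'(t)=a(X_{x_2}(t))-a(X_{x_1}(t))\ge 0$, and on any compact set $\{|u|\le C\}$ one has $\inf_{|u|\le C}\bigl(a(u+(x_2-x_1))-a(u)\bigr)=:\eta>0$. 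If the trajectories stayed in a compact set this would force $D(t)\ge \eta t\to\infty$, contradicting $D\le 2R$. Thus two non-escaping points cannot coexist, and the single one (if any) must be $x^*$.

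The step needing the most care — which I expect to be the \emph{main obstacle} — is turning ``boundedness along the sparse random sequence $\{t_n\}$'' into genuine boundedness of the trajectories, which is what the integral lower bound for $D'$ actually requires. Lemma \ref{remk33} only constrains the values $X_x(t_n)$ at the anchor times, so a priori a non-escaping trajectory could make brief large excursions in the gaps $(t_n,t_{n+1})$, where the integrand $a(X_{x_2})-a(X_{x_1})$ is small. To close this I would exploit that $|B_H(t)-B_H(t_n)|\le h(t-t_n)$ holds for \emph{all} $t\ge t_n$, not merely at $t=t_n$: re-anchoring at a time where a trajectory first reaches a high level and using that $h(s)=o(s^{1/(1-\alpha)})$ while the drift produces growth of order $s^{1/(1-\alpha)}$, one shows that a sufficiently high excursion must run off to $+\infty$, so a non-escaping trajectory is bounded above; the symmetric argument bounds it below. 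With genuine boundedness established the compact-set estimate above applies verbatim, and the lemma follows.
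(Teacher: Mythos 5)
Your overall strategy is the same as the paper's: use the monotonicity of $x\mapsto X_x$ and the fact that $D(t)=X_{x_2}(t)-X_{x_1}(t)$ is nondecreasing, then derive a contradiction between $D$ being bounded (because two non-escaping trajectories are trapped in $[-R,R]$ at the anchor times $t_n$) and $D$ growing because the drift strictly separates ordered trajectories. The definition of $x^*$ as an infimum, the finiteness argument, and the reduction to ``for every $x_1<x_2$, a.s.\ either $X_{x_1}\to-\infty$ or $X_{x_2}\to+\infty$'' all match the paper.

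The genuine gap is exactly the step you flag as the main obstacle, and your proposed repair does not close it. To get $D(t)\ge\eta t$ you need the trajectories confined to a fixed compact set for \emph{all} $t$, but condition~3 only controls the noise increments anchored at the times $t_n$, and the gaps $t_{n+1}-t_n$ are unbounded. Your re-anchoring idea fails quantitatively: if an excursion to level $L$ occurs at $\sigma=t_n+u$, the best available bound on $|B(\sigma+s)-B(\sigma)|$ is $h(u+s)+h(u)$, whose constant term grows with $u$; hence the threshold $R$ supplied by Lemma~\ref{remk33} for the re-anchored equation grows with $u$, and since $u$ ranges over unboundedly long gaps there is no single level $L$ above which every excursion must escape. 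A Gronwall bound from $|X(t_n)|\le R$ only controls $|X(t)|$ by roughly $R+h(t-t_n)+(t-t_n)^{\frac{1}{1-\alpha}}$ over the gap, which is not uniform either. So global boundedness of a non-escaping trajectory is not established, and without it the estimate $D'(t)\ge\eta$ for all $t$ is unavailable.

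The paper's device avoids the issue entirely: it only accumulates separation over the unit intervals $[t_n,t_n+1]$. There, $|X_{x_i}(t_n)|\le R$ together with $|B(t)-B(t_n)|\le h(t-t_n)\le h(1)$ and the sublinear growth of the drift gives a bound $|X_{x_i}(t)|\le C$ uniform in $n$, while $D(s)\ge x_2-x_1>0$; strict monotonicity and continuity of the drift then yield $\frac{d}{dt}D(t_n+t)\ge\ve_0>0$ for $t\in[0,1]$, uniformly in $n$. Passing to a subsequence with $t_{n+1}-t_n\ge 1$ and using that $D$ is nondecreasing gives $D(t_n)\ge x_2-x_1+n\ve_0\to\infty$, contradicting $D(t_n)<2R$. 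Replacing your ``global boundedness plus $D(t)\ge\eta t$'' step by this local accumulation makes your argument complete.
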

\begin{proof}
Let $h$ and $\{t_n\}$ satisfy the assumptions of the theorem, the existence see in Remark \ref{remkCond3}.
Select $R>0$ from Lemma \ref{remk33} such that if  
$$
y_x(t)=x+ \int_0^t (c_+\1_{y_x(s)\geq 0} -c_-\1_{y_x(s)< 0}) 
 |y_x(s)|^\alpha ds
+g(t),\ \  t\geq 0,
$$
with $|x|>R$ and $|g(t)|\leq h(t), t\geq 0, $ then $y_x(t), t\geq 0,$ never changes 
the sign and $\lim_{t\to\infty} |y_x(t)|=\infty.$

It follows from the monotonicity of $X_x(t)$ in $x$ that to prove the Lemma it is sufficient
to show that for every pair $x_1<x_2$ 
$$
\Pb(\lim_{t\to\infty} X_{x_1}(t)=-\infty \ \ \mbox{ or }\ \ \lim_{t\to\infty} X_{x_2}(t)=+\infty)=1.
$$
Assume the converse. Then 
there exists
 $\omega$  such that  
 $$
 \forall n\geq 1\ \ |B_\beta(t)- B_\beta(t_n)|\leq h(t-t_n), t\geq t_n,
 $$
 and
 $$
 \forall n\geq 1\ \ |X_{x_1}(t_n)| \leq R,\ |X_{x_2}(t_n)| \leq R,
$$
where $R$ is as before. 

Hence for this $\omega$
$$
\exists \ve_0>0\ \forall n\geq 1\ \ \forall t\in[0,1]\ \ \frac{d(X_{x_2}(t_n+t)-X_{x_1}(t_n+t))}{dt}\geq \ve_0.
$$

So
$$
  \forall n\geq 1\    X_{x_2}(t_n+1)-X_{x_1}(t_n+1) \geq X_{x_2}(t_n )-X_{x_1}(t_n )
+ \ve_0.
$$
Without loss of generality we may assume that $t_{n+1}-t_{n}\geq 1, n\geq 1.$

Therefore
$$
    X_{x_2}(t_n )-X_{x_1}(t_n ) \geq x_2-x_1 +n  \ve_0\to\infty,\ \ n\to\infty.
$$
This contradiction proves the Lemma.
\end{proof}
It follows from  Lemma \ref{lem_x^*} that there is a at most countable set $A\subset\mbR$
such that
$$
\forall x\in A\ \ \ \Pb(\lim_{t\to\infty} |X_x(t)|\neq \infty) >0.
$$
Fractional Brownian motion has stationary increments. So, if for some $x_0\in\mbR,\ t>0$
$$
\Pb(X_{x_0}(t)\notin A ) =1,
$$
then
\be\label{eq1981}
\Pb(\lim_{t\to\infty}|X_{x_0}(t)|=\infty) =1.
\ee
By Girsanov's theorem \cite{NualartOuknine}, the distribution of $X_{x_0}(t)$
is absolute continuous for any $x_0\in\mbR,\ t>0$. Hence
\eqref{eq1981} is satisfied for any $x_0\in\mbR$. So,  Theorem \ref{thm_main} is applicable in the case when $\alpha>0$ and  a self-similar process $B_\beta$ is an fBm.
\end{example}


\begin{thebibliography}{99}


\bibitem{EngelbertSchmidt} {\sc Engelbert, H. J. and  Schmidt, W.}  1991. { Strong Markov Continuous Local Martingales and Solutions of One-Dimensional Stochastic Differential Equations (Part III)}. {\em Mathematische Nachrichten},  {\bf 151(1)}: 149-197.


\bibitem{TanakaTsuchiya74} {\sc  Tanaka, H., Tsuchiya, M.,  and  Watanabe, S.}  1974.  { Perturbation of drift-type for L\'{e}vy processes.}
 {\em Journal of Mathematics of Kyoto University},  {\bf 14(1)}: 73-92.


\bibitem{NualartOuknine} {\sc Nualart, D.  and   Ouknine, Y.}  2002.  { Regularization of differential equations by fractional noise}.
{\em  Stochastic Processes and their Applications},  {\bf 102(1)}: 103-116.




\bibitem{BaficoBaldi}  {\sc Bafico, R. and Baldi, P.}  1982. {Small random perturbations of Peano phenomena}. {\em Stochastics}, {\bf 6(
3)}:  279-292.


\bibitem{Flandoli} {\sc Flandoli, F.}  2010.  {  Random Perturbation of PDEs and Fluid Dynamic Models}. {\em Lecture Notes in Mathematics},  {\bf 2015}, Springer.


\bibitem{FlandoliMetrika} {\sc Flandoli, F.}  2009.  { Remarks on uniqueness and strong solutions to deterministic and stochastic differential equations.}
 {\em Metrika},  {\bf 69(2-3)}: 101-123.


\bibitem{DelarueFlandoliVincenzi} Delarue, F., Flandoli, F., and Vincenzi, D.  2014. { Noise prevents collapse of Vlasov-Poisson point charges},
{\em  Communications on Pure and Applied Math.},  {\bf   67(10)}: 1700-1736.





\bibitem{BuckdahnOuknineQuincampoix} {\sc Buckdahn, R., Ouknine, Y., and Quincampoix, M.}  2009.  { On limiting values of stochastic differential equations with small noise intensity tending to zero}. {\em Bull. Sci. Math.} {\bf 133(3)}:  229-237.

\bibitem{Portenko1994}
{\sc  Portenko N.I.} 1994.
 Some perturbations of drift-type for symmetric stable processes.
  {\em Random Operators and Stochastic Equations}, {\bf 2(3)}: 211--224, 1994.


\bibitem{PilipenkoProske} {\sc Pilipenko, A. and  Proske, F.}  2015.  { On a selection problem for small noise perturbation in multidimensional case.}
 {\em arXiv preprint}, arXiv:1510.00966.
 


 
\bibitem{Khintchine} {\sc  Khintchine, A. }  1938.  { Zwei S\"{a}tze \"{u}ber stochastische Prozesse mit stabilen Verteilungen},
 {\em Rec. Math. [Mat. Sbornik] N.S.},  {\bf 3(45):3}, 577Ц584.

\bibitem{Kono} {\sc  Kono, N.}  1983.  { Iterated log type strong limit theorems for self-similar processes.}
 {\em Proceedings of the Japan Academy, Series A, Mathematical Sciences},  {\bf 59(3)}: 85-87.




\bibitem{Takashima89} {\sc Takashima, K.}  1989.  { Sample path properties of ergodic self-similar processes.}
{\em  Osaka Journal of Mathematics},  {\bf 26(1)}:  159-189.


\bibitem{ChenWang}  {\sc  Chen, Z. Q.,  Wang, L.} 2016. Uniqueness of stable processes with drift. {\em Proceedings of the American Mathematical Society}, {\bf 144(6)}, 2661-2675.

\bibitem{GS} {\sc  Gikhman, I.I. and Skorokhod, A.V.}
 { Stochastic differential equations}. (Russian)  Naukova Dumka, Kiev. 1968. - 354 p.; English translation  - Ergebnisse der Mathematik und ihrer Grenzgebiete,  Band 72. - Springer-Verlag. New York; Heidelberg. 1972. - viii+354 p.


\bibitem{KKR}  {\sc Keller, G.,  Kersting, G., and  Rosler, U.}    1984.  {On the asymptotic behaviour of solutions of stochastic
differential equations}, {\em Z.Wahrsch. Verw. Geb.}, {\bf 68(2):} 163Ц184.

\bibitem{BKT} {\sc Buldygin, V. V., Klesov, O. I., Steinebach, J. G., and  Tymoshenko, O. A. }  2008.  { On the $\vf$-asymptotic behaviour of solutions of stochastic differential equations.} {\em Theory of Stochastic Processes}, 
{\bf 14  (30)}, no. 1: 11Ц29.




\bibitem{FlandoliHoegele} {\sc Flandoli, F.  and  Hogele, M.}  2014. 
 { A solution selection problem with small symmetric stable perturbations}. 
{\em arXiv preprint }, arXiv:1407.3469.


\end{thebibliography}
\end{document}